\newtheorem{theorem}{Theorem}
\newtheorem{proposition}{Proposition}
\theoremstyle{remark}
\newtheorem{remark}{Remark}
\theoremstyle{definition}
\newtheorem{example}{Example}[section]
\begin{document}

		\title{Counting points on hyperelliptic curves of type $y^2=x^{2g+1} + ax^{g+1} + bx$%
		}

		\author{Novoselov~S.\,A.\footnote{The reported study was funded by RFBR according to the research project 18-31-00244.}}
		\affil{Immanuel Kant Baltic Federal University}
		
		\date{}
		
		\maketitle

		\begin{abstract}
			In this work, we investigate hyperelliptic curves of type $C: y^2 = x^{2g+1} + ax^{g+1} + bx$ over the finite field $\mathbb{F}_q, q = p^n, p > 2$. For the case of $g = 3$ and $4$ we propose algorithms to compute the number of points on the Jacobian of the curve with complexity $\tilde{O}(\log^4{p})$ and $\tilde{O}(\log^8{p})$. For curves of genus $2-7$ we give a complete list of the characteristic polynomials of Frobenius endomorphism modulo $p$.
		\end{abstract}

\section*{Introduction}
Let $\mathbb{F}_q$ be a finite field of size $q = p^n$ and characteristic $p > 2$.

Let $C$ be a genus $g$ hyperelliptic curve defined over $\mathbb{F}_q$ by equation
\[
y^2 = x^{2g+1} + a x^{g+1} + b x.
\]

For genus $2$ case it is known \cite{Satoh2009} that the Jacobian $J_C$ of the curve $C$ splits into product of certain explicitly given elliptic curves over some extension of base field. There are also explicit formulae \cite{GuillevicVergnaud2012} expressing the number of points on the Jacobian of the curve in terms of traces of Frobenius of the elliptic curves.

The purpose of this note is to generalize results for $g = 2$ to the higher genera ($g \geq 3$) and derive algorithms for counting points on $J_C$ in this case. To speed up point counting, where it is possible, we use connection of the Cartier-Manin matrix of the curve $C$ with Legendre polynomials from \cite{Novoselov2017}. On the other side, this connection is also used to obtain new congruences for Legendre polynomials $P_{\frac{p-1}{8}}, P_{\frac{3p-3}{8}}$ extending the results from the works \cite{BrillhartMorton2004,Sun2013,Sun2013b,Sun2013c}.

\textit{Background and notation.}
Let $X$ be a genus $g$ hyperelliptic curve defined over finite field $\mathbb{F}_q$. A zeta-function of the curve $X$ is given by
\[
Z(X/\mathbb{F}_q; T) = \exp\left( \sum_{k=1}^{\infty} \frac{\#X(\mathbb{F}_{q^k})}{k} T^k \right)
= \frac{L_{X,q}(T)}{(1-T)(1-qT)}.
\]
The polynomial $L_{X,q}(T)$ has a form
\[
L_{X,q}(T) = q^g T^{2g} + a_1 q^{g-1} T^{2g-1} + ... + a_g T^{g} + a_{g-1} T^{g-1} + ... + a_1 T + 1,
\]
where $a_i \in \mathbb{Z}$ and $a_i \leq \binom{2 g}{i} q^{\frac{i}{2}}$. Coefficients of the polynomial $L_{X,q^k}(T)$ are denoted by $a_{i,k}$.

Let $J_X(\mathbb{F}_q)$ be the Jacobian of the curve over finite field $\mathbb{F}_q$ and let $\chi_{X,q}(T)$ be the characteristic polynomial of the Frobenius endomorphism on $J_X$. Then we have $L_X(T) = T^{2g} \chi_{X,q}(\frac{1}{T})$ and $\#J_X(\mathbb{F}_q) = L_{X,q}(1) = \chi_{X,q}(1)$. Because of that, point-counting on the Jacobian is equivalent to determining of $\chi_{X,q}(T)$ (or $L_{X,q}(T)$).

The order of the Jacobian satisfies Hasse-Weil bounds
\[
(\sqrt{q} - 1)^{2g} \leq \#J_X(\mathbb{F}_q) \leq (\sqrt{q} + 1)^{2g}.
\]
For more details on curves over finite fields and their Jacobians we refer to \cite[\S5.2]{CohenFrey+2005}.

\textit{Organization of the paper.}
In Section \ref{sec:jacobian_decomposition} we obtain a decomposition of $J_C \sim J_{X_1} \times J_{X_2}$ over $\mathbb{F}_q[\sqrt[g]{b}]$ for odd $g$ and over $\mathbb{F}_q[\sqrt[2g]{b}]$ in case of $g$ is even. For curves $X_1$ and $X_2$, we provide explicit equations in terms of Dickson polynomials. This is done by using results and methods from \cite{Paulhus2007,Paulhus2008,TauTopVer1991,Smith2005}.

Section \ref{sec:CM_matrices} is devoted to the study of Cartier-Manin matrix of the curve $C$. We obtain a complete list of possible polynomials $\chi_{C,p}(T) \mod{p}$. This fills missing cases in our previous work \cite{Novoselov2017}. We use this list to derive a point-counting algorithm in genus $3$ case and to find new congruences for Legendre polynomials connected with genus $4$ curves.

Section \ref{sec:chi_C_computation} contains description of a general method for counting points on $J_C(\mathbb{F}_q)$ using decomposition from Section \ref{sec:jacobian_decomposition} and results from Section \ref{sec:CM_matrices}. Sections \ref{sec:pc_genus3}, \ref{sec:pc_genus4} contain algorithms and implementation details for $g=3,4$.

\section{Decomposition of the $J_C$.}
\label{sec:jacobian_decomposition}
The curve $C$ is isomorphic to curve
\[
C': y^2 = x^{2g+1} + c x^{g+1} + x, c = \pm \frac{a}{\sqrt{b}}
\] over finite field $\mathbb{F}_q[\sqrt[4g]{b}]$ via isomorphism $(x,y) \mapsto (b^{\frac{1}{2g}} x, b^{\frac{2g+1}{4g}} y)$. Because of that, we can get decomposition for $J_{C}$ over $\mathbb{F}_q[\sqrt[4g]{b}]$ from decomposition of $J_{C'}$. Since the curve $C'$ has automorphisms, we can decompose $J_{C'}$ by using method of Kani and Rosen\cite{KaniRosen1989}. In the work of Paulhus\cite{Paulhus2007} there is decomposition for the curve $C'$  over algebraically closed field. But the method works over any field as long as we know the group of automorphisms or its subgroups. Thus, we need to obtain information about subgroups of this group over finite field.

Denote by $Aut_{C}(\mathbb{F}_q)$ an automorphism group of curve over the finite field $\mathbb{F}_q$, $\mathcal{C}_m$ a cyclic group of order $m$ and by $\mathcal{D}_{m}$ a dihedral group of order $m$. Let also $\zeta_m$ be a primitive $m$-root of unity. Every hyperelliptic curve has hyperelliptic involution, denote it by $\omega: (x,y) \mapsto (x,-y)$. 

All possible groups of automorphisms for hyperelliptic curve over algebraically closed field are known \cite{BrandtStichtenoth1986,BujalanceGamboaGromadzki1993}. In \cite{TauTopVer1991} there are explicit automorphisms for curve $C'$. We collect them in the following proposition.

\begin{proposition}
	\label{th:gen_aut_group_of_c_rho}
	Let $C': y^2 = x^{2g+1} + c x^{g+1} + x$ be a genus $g$ hyperelliptic curve defined over a finite field $\mathbb{F}_q, q = p^n$.
	\begin{enumerate}
		\label{th:gen_aut_nonhyp_involution_s}
		\item $Aut_{C'}(\mathbb{F}_q)$ contains a non-hyperelliptic involution
		\[
		s: (x,y) \mapsto \left(\frac{1}{x}, \frac{y}{x^{g+1}}\right)
		\] and subgroup $\mathcal{C}_2 \times \mathcal{C}_2$.
		\item If $p \nmid 2g$ and $2g | q-1$ then $Aut_{C'}(\mathbb{F}_q)$ contains an automorphism
		\[
		r: (x, y) \mapsto \left( \zeta_{g} x, \zeta_{2g} y \right)
		\] of order $2g$ and subgroup $\mathcal{D}_{4g}$.
	\end{enumerate}
\end{proposition}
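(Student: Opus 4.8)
The plan is to verify each claimed map directly as a substitution into the defining equation of $C'$, and then read off the group structure from the resulting relations; everything reduces to bookkeeping with roots of unity.

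For part (1), I would substitute $(x,y)\mapsto(1/x,\,y/x^{g+1})$ into $y^2 = x^{2g+1}+cx^{g+1}+x$. Writing the right-hand side $X^{2g+1}+cX^{g+1}+X$ with $X=1/x$ over the common denominator $x^{2g+1}$ gives $(1+cx^{g}+x^{2g})/x^{2g+1}$, while the transformed left-hand side $y^2/x^{2g+2}$ becomes $(x^{2g}+cx^{g}+1)/x^{2g+1}$ after applying the curve equation; these agree, so $s$ preserves $C'$, and since its coefficients lie in the prime field it is defined over $\mathbb{F}_q$. Applying $s$ twice returns $(x,y)$, so $s^2=\mathrm{id}$, and since $s$ moves the $x$-coordinate it differs from $\omega:(x,y)\mapsto(x,-y)$; thus $s$ is a non-hyperelliptic involution. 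A one-line check gives $s\omega=\omega s$, and as both are involutions with $s\neq\omega$, the subgroup $\langle s,\omega\rangle$ is isomorphic to $\mathcal{C}_2\times\mathcal{C}_2$.

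For part (2), the hypotheses $p\nmid 2g$ and $2g\mid q-1$ guarantee that $\mathbb{F}_q^{\times}$, being cyclic of order $q-1$, contains an element $\zeta_{2g}$ of exact order $2g$; I set $\zeta_g:=\zeta_{2g}^2$, a primitive $g$-th root of unity. Substituting $(x,y)\mapsto(\zeta_g x,\zeta_{2g}y)$ and using $\zeta_g^{g}=1$ collapses each monomial on the right to its $\zeta_g$-multiple, so the right-hand side becomes $\zeta_g(x^{2g+1}+cx^{g+1}+x)$, matching the transformed left-hand side $\zeta_{2g}^{2}y^2=\zeta_g y^2$; hence $r$ is an automorphism over $\mathbb{F}_q$. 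Since $r^{k}$ multiplies $y$ by $\zeta_{2g}^{k}$, the order of $r$ is the least $k$ with $2g\mid k$, namely $2g$, and one notes in passing that $r^{g}=\omega$.

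Finally, to obtain the dihedral group I would compute the conjugate $srs^{-1}$. Carrying $s$ through $r$ and simplifying $\zeta_{2g}\zeta_g^{-1}=\zeta_{2g}^{-1}$ yields $sr=r^{-1}s$, i.e.\ $srs^{-1}=r^{-1}$; together with $r^{2g}=s^2=\mathrm{id}$ this is exactly the presentation of $\mathcal{D}_{4g}$. It then remains only to confirm that $s\notin\langle r\rangle$ (every power of $r$ acts on the $x$-line by scaling, whereas $s$ inverts it), so that $\langle r,s\rangle$ has the full order $4g$. The computations are elementary throughout; the only point requiring genuine care is the root-of-unity bookkeeping—keeping the identity $\zeta_g=\zeta_{2g}^2$ consistent and verifying that the field hypotheses really force $\zeta_{2g}$ to have \emph{exact} order $2g$—which is where a careless argument would slip.
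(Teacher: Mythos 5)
Your proof is correct, but it takes a genuinely different route from the paper for a simple reason: the paper offers no proof of this proposition at all. It is presented as a collection of explicit automorphisms taken from \cite{TauTopVer1991} (``We collect them in the following proposition''), with all verification deferred to that reference. Your direct computation supplies exactly what the paper leaves implicit: the substitution check that $s$ preserves the equation of $C'$, the observation that one must take $\zeta_g := \zeta_{2g}^2$ (a compatibility the proposition never states, but which is forced, since matching $\zeta_{2g}^2 y^2$ against $\zeta_g^{2g+1}x^{2g+1}+c\,\zeta_g^{g+1}x^{g+1}+\zeta_g x$ requires precisely $\zeta_{2g}^2=\zeta_g$), the role of the hypothesis $2g \mid q-1$ in producing a rational root of unity of exact order $2g$, the relation $sr=r^{-1}s$, and the order count showing $\langle r,s\rangle$ has the full $4g$ elements, which matches the paper's convention that $\mathcal{D}_m$ denotes the dihedral group of order $m$. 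Two minor points would tighten it further: (i) strictly speaking, the substitutions define birational self-maps of the affine curve, and one should invoke the standard fact that a birational self-map of a smooth projective curve extends uniquely to an automorphism of the smooth model (e.g.\ $s$ sends the affine point $(0,0)$ to a point at infinity); (ii) your remark $r^g=\omega$ uses that $\zeta_{2g}^g$ is the unique element of order $2$ in $\mathbb{F}_q^\times$, namely $-1$, which is worth stating since it is what places the hyperelliptic involution inside $\langle r\rangle$. What your approach buys is self-containedness; what the paper's approach buys is brevity, at the cost of hiding the $\zeta_g=\zeta_{2g}^2$ normalization and the rationality argument that your write-up correctly identifies as the only places where care is needed.
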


Decomposition for the Jacobian of the curve $C'$ in case of $g = 3,4$ follows from Proposition \ref{th:gen_aut_group_of_c_rho} and \cite[Th. 4]{Paulhus2008}:
\begin{itemize}
\item (genus 3)
$J_{C'} \sim E \times J_{X'}$ if $\mathcal{C}_{2} \times \mathcal{C}_{2} \subseteq Aut_{\mathbb{F}_q}(C')$ and $J_{C'} \sim E_1^2 \times E_2$ if $\mathcal{D}_{12} \subseteq Aut_{\mathbb{F}_q}(C')$
\item (genus 4) $J_{C'} \sim J_{X'_1} \times J_{X'_2}$ if $\mathcal{C}_{2} \times \mathcal{C}_{2} \subseteq Aut_{\mathbb{F}_q}(C')$ and $J_{C'} \sim J_{X'_1}^2$ if $\mathcal{D}_{16} \subseteq Aut_{\mathbb{F}_q}(C')$
\end{itemize}

Note that condition $\mathcal{C}_{2} \times \mathcal{C}_{2} \subseteq Aut_{\mathbb{F}_q}(C')$ holds in any field, therefore we always have corresponding decomposition. But $\mathcal{D}_{4g} \subseteq Aut_{C'}$ holds in the field $\mathbb{F}_q[\zeta_{2g}]$, so we should work in an extension of $\mathbb{F}_q$ of degree upto $2g$ to get decomposition. The degree of this extension is the smallest integer $k$ such that $2g | q^k-1$.

Since the group of automorphisms contains $\mathcal{C}_2 \times \mathcal{C}_2$ the Jacobian splits as
\[
J_{C'}(\mathbb{F}_q) \sim J_{C'/\left< s \right>}(\mathbb{F}_q) \times J_{C'/\left< s \omega \right>}(\mathbb{F}_q).
\]

To find equations for quotients of the curve $C'$, we can use the following theorem.

\begin{theorem}
\label{th:quotients_of_Ct}
Let $C': y^2 = x^{2g+1} + c x^{g+1} + x$ be a genus $g$ hyperelliptic curve defined over a finite field $\mathbb{F}_q$ where $q = p^n$, $p>2$, $\gcd(g, p) = 1$. Denote by $D_{m}(x, \alpha)$ and $D_{m}:=D_m(x, 1)$ a Dickson polynomial of degree $m$. Then 
\begin{enumerate}
	\item the quotient of the curve $C'$ modulo the involution $s: (x,y) \mapsto \left(\frac{1}{x}, \frac{y}{x^{g+1}}\right)$ is given by
	\[
		X'_1: y^2 = D_{g}(x) + c
	\]
	if $g$ is odd and by
	\[
		X'_1: y^2 = (x+2)(D_{g}(x) + c),
	\]
	if $g$ is even.
	\item the quotient of the curve $C'$ modulo the involution $s \omega: (x,y) \mapsto \left(\frac{1}{x}, -\frac{y}{x^{g+1}}\right)$ is given by
	\[
		X'_2: y^2 = (x^2 - 4) (D_{g}(x) + c)
	\]
	if $g$ is odd and by
	\[
	X'_2: y^2 = (x-2)(D_{g}(x) + c),
	\]
if $g$ is even.
\end{enumerate}
\end{theorem}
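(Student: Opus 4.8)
The plan is to compute the quotient curves explicitly by finding the subfield of the function field $\mathbb{F}_q(C')$ fixed by the relevant involution. For the quotient by $s:(x,y)\mapsto(1/x,\,y/x^{g+1})$, I would first identify functions on $C'$ that are invariant under $s$. The natural candidate is the Dickson polynomial combination: since $s$ sends $x\mapsto 1/x$, the symmetric function $u = x + 1/x$ is $s$-invariant, and Dickson polynomials satisfy the defining identity $D_g(x+1/x) = x^g + x^{-g}$, so $D_g(u)$ is a clean invariant coordinate. I would then look for an invariant analogue of $y$. A short computation should show that $v = y(1+1/x^{(g+1)})$ or a similar scaling of $y$ by a power of $x$ is fixed by $s$, since $s$ acts on $y$ by multiplication by $x^{-(g+1)}$.

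The key step is then to re-express the curve equation $y^2 = x^{2g+1}+cx^{g+1}+x$ in terms of $u$ and $v$. I would factor out the symmetry: writing the right-hand side as $x^{g+1}(x^g + c + x^{1-g})$ and dividing through by an appropriate power of $x$, one obtains $y^2/x^{g+1} = x^g + x^{-g} + c = D_g(u) + c$. Since $v^2$ is (up to the invariant factor $x^{g+1}\cdot(\text{something})$) proportional to $y^2/x^{g+1}$, this directly yields the relation $v^2 = D_g(u)+c$ in the odd case. The parity dependence enters because when $g$ is even the element $x^{(g+1)/2}$ is not rational, forcing an extra factor: the correct invariant $y$-coordinate must absorb a factor like $(x+1)$ or $(x-1)$, which accounts for the appearance of $(x+2) = u+2$ (note $u+2 = (x^{1/2}+x^{-1/2})^2$ reflects exactly this square-root obstruction). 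For the $s\omega$ quotient I would repeat the analysis with the sign flip on $y$; the anti-invariance of $y$ under $s\omega$ means the invariant coordinate picks up the factor distinguishing $X'_2$ from $X'_1$, giving the extra $(x^2-4) = (u-2)(u+2)$ in the odd case and $(x-2)$ in the even case.

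The main obstacle I expect is handling the parity of $g$ cleanly and verifying that the claimed equations define curves of the correct genus, so that the map $C' \to X'_i$ really is the quotient and not a further cover. Concretely, I would check that $\deg(D_g(x)+c) = g$ and count the ramification of the degree-two map $C' \to X'_i$ via Riemann–Hurwitz to confirm the genera add up correctly (for odd $g$ one expects $g_{X'_1} + g_{X'_2} = g$, matching the $J_{C'}\sim J_{X'_1}\times J_{X'_2}$ decomposition). The even case requires care because the branch point structure differs: the factors $(x+2)$ and $(x-2)$ shift one branch point between the two quotients, and I would need to track the fixed points of $s$ and $s\omega$ on $C'$ precisely, in particular the behavior at $x=\pm 1$ (the fixed points of $x\mapsto 1/x$) and at $x=0,\infty$.

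Finally, I would confirm rationality over $\mathbb{F}_q$: all the invariant functions $u = x+1/x$, $D_g(u)$, and the adjusted $y$-coordinate are defined over $\mathbb{F}_q$ (using $\gcd(g,p)=1$ only to ensure separability and that $D_g$ has degree $g$ in characteristic $p$), so the quotient equations hold over the base field without further extension. The Dickson polynomial identity $D_g(x+x^{-1},1) = x^g + x^{-g}$ is the engine driving the whole computation, and once the invariant coordinates are pinned down the verification is a direct substitution.
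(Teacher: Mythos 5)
Your proposal is correct and follows essentially the same route as the paper, which likewise computes each quotient as the fixed field generated by $\xi = x + \frac{1}{x}$ and a parity-dependent rescaling of $y$, with the Dickson identity $D_g\left(x+\frac{1}{x}\right) = x^g + x^{-g}$ doing the work; the only cosmetic difference is that for the $s$-quotient the paper quotes the known equation $y^2 = x f(x^2-2) + c$ from Tautz--Top--Verberkmoes and identifies $x f(x^2-2) = D_g(x)$ via Smith and a factorization result of Bhargava--Zieve, rather than recomputing invariants from scratch. One small correction to your computation: the candidate $v = y\left(1 + x^{-(g+1)}\right)$ actually satisfies $v^2 = \left(D_{g+1}(\xi)+2\right)\left(D_g(\xi)+c\right) = D_{(g+1)/2}(\xi)^2\left(D_g(\xi)+c\right)$ rather than $D_g(\xi)+c$, so the clean generators are the pure power $y/x^{(g+1)/2}$ for odd $g$ and $\frac{y}{x^{g/2}}\left(1 \pm \frac{1}{x}\right)$ for even $g$ --- exactly the choices the paper makes.
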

\begin{proof}
1. From \cite[Prop.3]{TauTopVer1991} we have that the quotient $C'/\left<s\right>$ is given by 
\[
y^2 = x f(x^2 - 2) + c
\]
if $n$ is odd and
\[
y^2 = (x+2) (f(x^2 - 2) + c)
\]
in case of $g$ is even. The polynomial $f \in \mathbb{F}_q$ is the monic polynomial whose roots are all the numbers $\zeta + \frac{1}{\zeta}$ for $\zeta \in \mathbb{\overline{F}}_q, \zeta^g = -1, \zeta \neq -1$.

In the work \cite[\S7.3]{Smith2005} it was shown that for odd $g$ we have $x f(x^2 - 2) = D_g(x)$.

The result for even $g$ follows from the factorization of the Dickson polynomial \cite[Th.1]{BhargavaZieve1999} over $\mathbb{\overline{F}}_q$
\[
D_g(x, \alpha) = \prod_{k=1,\,k\,odd}^{2 g - 1} (x - \sqrt{\alpha} (\zeta_{4 g}^k + {\zeta_{4 g}^{-k}})) = f(x^2 - 2), 
\]
where $\zeta_{4 g}$ is a primitive $4g$-root of unity.

2. The proof is similar to the proof in \cite[Prop.2]{TauTopVer1991} with addition of Dickson polynomials. We use relations
\[
x^{2g} + 1 = x^g \left(x+\frac{1}{x}\right) f\left(x^2 + \frac{1}{x^2}\right) = x^{g-1} \left(x+\frac{1}{x}\right) D_g\left( x + \frac{1}{x} \right)
\] for odd $g$ and
\[
x^{2g} + 1 = x^g  f\left(x^2 + \frac{1}{x^2}\right) = x^{g} D_g\left( x + \frac{1}{x} \right)
\] for even $g$.

A function field of $C'/\left<s\omega\right>$ is a field of $s\omega$-invariant functions from function field of $C'$. It's generated by $\xi = \left( x + \frac{1}{x} \right)$, $\eta = \frac{y}{x^{\frac{g-1}{2}}}  \left(1 - \frac{1}{x^{2}}\right)$ in the case of $g$ odd and by  $\xi = \left( x + \frac{1}{x} \right), \eta = \frac{y}{x^{\frac{g}{2}}} \left(1 - \frac{1}{x}\right)$ when $g$ is even.

Using the relations and a property of Dickson polynomials:
\[
	D_g(x+1/x) = x^g + 1/x^g
\]
we find an equation
\[
\eta^2 = (\xi^2 - 4) (D_g(\xi) + c)
\]
for odd $g$ and
\[
\eta^2 = (\xi - 2) (D_g(\xi) + c)
\] for even $g$.
\end{proof}

Theorem \ref{th:quotients_of_Ct} allows us to find explicit equations for quotient curves in the decomposition of $C'$ over finite field $\mathbb{F}_q$. Since $C \simeq C'$ we obtain decomposition for the curve $C$:
\[
J_C(\mathbb{F}_q[\sqrt[4g]{b}]) \simeq J_{C'/\left< s\right>}(\mathbb{F}_q[\sqrt[4g]{b}]) \times J_{C'/\left< \omega s\right>}(\mathbb{F}_q[\sqrt[4g]{b}]).
\]

The theorem can be generalized to the more general class of curves using results from \cite[\S7.3]{Smith2005}. It gives us better decomposition for $J_C$, which occurs over the field $\mathbb{F}_q[\sqrt[g]{b}]$ in the case of odd $g$ and over $\mathbb{F}_q[\sqrt[2 g]{b}]$ when $g$ is even.

\begin{theorem}
\label{th:X_quotients}
Let $X: y^2 = x^{2g+1} + a x^{g+1} + \alpha^g x$ be a genus $g$ hyperelliptic curve defined over the finite field $\mathbb{F}_q$. Let $\omega$ be a hyperelliptic involution. Then the curve has a non-hyperelliptic involution $\sigma: (x, y) \mapsto (\frac{\alpha}{x},  y\frac{\alpha^\frac{g+1}{2}}{x^{g+1}})$ and equations for quotients of the curve $X$ modulo involutions $\sigma$ and $\omega \sigma$ are following.
\begin{enumerate}
	\item If $g$ is odd then
	\begin{equation}
		\label{eq:X_odd_sigma}
		X/\left< \sigma \right>: y^2 = D_{g}(x,\alpha) + a
	\end{equation}
	 and
	\begin{equation}
		\label{eq:X_odd_omega_sigma}
		X/\left< \omega \sigma \right>: y^2 = (x^2 - 4\alpha)(D_{g}(x,\alpha) + a),
	\end{equation}
	where $D_g(x, \alpha)$ is a Dickson polynomial of degree $g$.
	\item If $g$ is even then
	\begin{equation}
		\label{eq:X_even_sigma}
		X/\left< \sigma\right>: y^2 = (x + 2\sqrt{\alpha}) (D_g(x, \alpha) + a)
	\end{equation} and
	\begin{equation}
		\label{eq:X_even_omega_sigma}
		X/\left< \omega \sigma \right>: y^2 = (x - 2 \sqrt{\alpha}) (D_g(x, \alpha) + a).
	\end{equation}
\end{enumerate}
\end{theorem}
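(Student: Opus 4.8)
The plan is to avoid any fresh invariant-theory computation by reducing the statement directly to Theorem~\ref{th:quotients_of_Ct} through a scaling change of coordinates. Substituting $x = \sqrt{\alpha}\,u$ and $y = \alpha^{(2g+1)/4}\,v$ into the equation of $X$ and dividing out the common scalar $\alpha^{(2g+1)/2}$, a direct calculation turns $X$ into the curve $C': v^2 = u^{2g+1} + c\,u^{g+1} + u$ with $c = a/\alpha^{g/2}$. Thus $X \simeq C'$ over $\mathbb{F}_q[\sqrt[4]{\alpha}]$, and the whole theorem will follow once I check that this isomorphism matches the relevant automorphisms and then transport the quotient equations of Theorem~\ref{th:quotients_of_Ct} back to the $x$-coordinate. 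Throughout I keep the standing hypotheses $p>2$ and $\gcd(g,p)=1$ needed for the Dickson-polynomial identities.

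First I would verify that the isomorphism intertwines the involutions. Under the same substitution the non-hyperelliptic involution $\sigma:(x,y)\mapsto(\alpha/x,\,y\,\alpha^{(g+1)/2}/x^{g+1})$ becomes $(u,v)\mapsto(1/u,\,v/u^{g+1})$, which is exactly the involution $s$ of Theorem~\ref{th:quotients_of_Ct}, while $\omega$ is manifestly preserved. Hence $\sigma$ corresponds to $s$ and $\omega\sigma$ to $s\omega$, so $X/\langle\sigma\rangle \simeq C'/\langle s\rangle$ and $X/\langle\omega\sigma\rangle \simeq C'/\langle s\omega\rangle$. Next I would pull the quotient equations back using the homogeneity of Dickson polynomials, namely $D_g(\sqrt{\alpha}\,u,\alpha) = \alpha^{g/2}D_g(u,1) = \alpha^{g/2}D_g(u)$, equivalently $D_g(u) = \alpha^{-g/2}D_g(x,\alpha)$ with $u = x/\sqrt{\alpha}$. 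For odd $g$, Theorem~\ref{th:quotients_of_Ct} gives $C'/\langle s\rangle: v^2 = D_g(u)+c$; substituting and absorbing the common factor $\alpha^{-g/2}$ into $v$ yields $y^2 = D_g(x,\alpha)+a$, which is \eqref{eq:X_odd_sigma}, and the factor $u^2-4 = (x^2-4\alpha)/\alpha$ produces \eqref{eq:X_odd_omega_sigma}. For even $g$ the linear factors $u\pm 2 = (x\pm 2\sqrt{\alpha})/\sqrt{\alpha}$ give \eqref{eq:X_even_sigma} and \eqref{eq:X_even_omega_sigma} in the same way, with the signs matching because $\sigma\mapsto s$ and $\omega\sigma\mapsto s\omega$.

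The step needing the most care is the bookkeeping of the fractional powers of $\alpha$: one must confirm that after absorbing the scalars $\alpha^{\pm g/2}$, $\alpha^{-1}$, $\alpha^{-1/2}$ into $v$ the resulting models have precisely the stated coefficients. In particular, the odd-$g$ quotients are defined over $\mathbb{F}_q$, since only integral powers of $\alpha$ survive (both $D_g(x,\alpha)$ and $x^2-4\alpha$ lie in $\mathbb{F}_q[x]$), whereas the even-$g$ quotients genuinely involve $\sqrt{\alpha}$ and hence live over $\mathbb{F}_q[\sqrt{\alpha}]$, consistent with the decomposition field $\mathbb{F}_q[\sqrt[2g]{b}]$ announced above; the passage through $\mathbb{F}_q[\sqrt[4]{\alpha}]$ in building the isomorphism is harmless because the quotient models descend to these smaller fields. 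As an alternative that sidesteps the auxiliary root $\sqrt[4]{\alpha}$, one can repeat the function-field computation of Theorem~\ref{th:quotients_of_Ct} with the $\alpha$-twisted generator $\xi = x + \alpha/x$ and the relation $x^{2g}+\alpha^g = x^{g}D_g(\xi,\alpha)$ (and its odd-$g$ variant carrying the extra factor $\xi$), using $D_g(x+\alpha/x,\alpha) = x^g + \alpha^g/x^g$; this functional equation for Dickson polynomials is the only part of the argument with real content, and either route reduces the proof to it.
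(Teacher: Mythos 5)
Your reduction is correct in substance but takes a genuinely different route from the paper. The paper does not derive Theorem~\ref{th:X_quotients} from Theorem~\ref{th:quotients_of_Ct}: it cites Smith for \eqref{eq:X_odd_sigma} and proves \eqref{eq:X_odd_omega_sigma}, \eqref{eq:X_even_sigma}, \eqref{eq:X_even_omega_sigma} by a direct function-field computation, writing $y^2 = x^{g+1}\bigl(D_g\bigl(x+\tfrac{\alpha}{x},\alpha\bigr)+a\bigr)$ and exhibiting explicit invariant generators $\xi = x+\tfrac{\alpha}{x}$, $\eta = \tfrac{y}{x^{(g-1)/2}}\bigl(1-\tfrac{\alpha}{x^2}\bigr)$ for odd $g$, resp.\ $\eta = \tfrac{y}{x^{g/2}}\bigl(1\pm\tfrac{\sqrt{\alpha}}{x}\bigr)$ for even $g$ (sign $+$ for $\sigma$, $-$ for $\omega\sigma$), whose single relation immediately gives the stated models over the correct base fields. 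You instead rescale $(x,y)=(\sqrt{\alpha}\,u,\ \alpha^{(2g+1)/4}v)$ to reduce to the $\alpha=1$ curve $C'$ and pull the models of Theorem~\ref{th:quotients_of_Ct} back via the homogeneity $D_g(\sqrt{\alpha}\,u,\alpha)=\alpha^{g/2}D_g(u)$. Your involution matching ($\sigma\mapsto s$, $\omega\mapsto\omega$) and scalar bookkeeping are correct, and the reduction is economical: it reuses Theorem~\ref{th:quotients_of_Ct} rather than redoing the invariant computation, and it makes transparent that Theorem~\ref{th:quotients_of_Ct} is exactly the case $\alpha=1$ of Theorem~\ref{th:X_quotients}.

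The one step your write-up asserts rather than proves is the descent at the end. An isomorphism over $\mathbb{F}_q[\sqrt[4]{\alpha}]$ from $X/\langle\sigma\rangle$ to a model whose coefficients happen to lie in $\mathbb{F}_q$ (odd $g$) or $\mathbb{F}_q[\sqrt{\alpha}]$ (even $g$) does not by itself show that the quotient is isomorphic to that model over the smaller field --- that is precisely a question about twists, and ``the models descend'' is not an argument. The field of definition genuinely matters here: the paper uses these models over $\mathbb{F}_q[\sqrt[g]{b}]$, resp.\ $\mathbb{F}_q[\sqrt[2g]{b}]$, for the Jacobian decomposition and the point counts, and a quadratic twist would change the $L$-polynomials. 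Fortunately the gap closes by a short computation you omitted: composing your three maps, every fractional power of $\alpha$ cancels. For instance, for odd $g$ the composite $\sigma$-quotient map is $(x,y)\mapsto\bigl(x+\tfrac{\alpha}{x},\ \tfrac{y}{x^{(g+1)/2}}\bigr)$, manifestly defined over $\mathbb{F}_q$, and then $\eta^2 = x^g+a+\tfrac{\alpha^g}{x^g}=D_g(\xi,\alpha)+a$ in one line; the composites in the remaining cases reproduce exactly the paper's generators quoted above. Alternatively, your closing suggestion --- rerunning the function-field computation directly with $\xi = x+\alpha/x$ and $D_g(x+\alpha/x,\alpha)=x^g+(\alpha/x)^g$ --- is precisely the paper's proof and avoids the issue altogether.
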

\begin{proof}
The case of equation \eqref{eq:X_odd_sigma} is proved in \cite[\S7.3]{Smith2005}. The case of $b=1$ for equations \eqref{eq:X_odd_sigma} and \eqref{eq:X_even_sigma} is proved in \cite{TauTopVer1991}, but without using Dickson polynomials. We use a similar approach to prove the remaining cases.

Dickson polynomials have a property $D_{g}\left(x+\frac{\alpha}{x}\right) = x^g + (\frac{\alpha}{x})^g$. This allows us to write the equation of $X$ in a form
\[
y^2 = x^{g+1} \left(D_{g}\left(x+\frac{\alpha}{x}, \alpha\right) + a\right).
\]

The function field of the curve $X/\left<\omega \sigma \right>$ is a field of $(\omega \sigma)$-invariant functions on $X$. It is generated by the functions $\xi = x + \frac{\alpha}{x}$ and $\eta = \frac{y}{x^\frac{g-1}{2}} \left( 1 - \frac{\alpha}{x^2} \right)$ in the case of odd $g$ and by $\xi = x + \frac{\alpha}{x}$ and $\eta = \frac{y}{x^\frac{g}{2}} \left( 1 - \frac{\sqrt{\alpha}}{x} \right)$ when $g$ is odd.

By using the property of Dickson polynomials, we find
\[
\eta^2 = (D_2(\xi,b) - 2\alpha) (D_g(\xi, \alpha) + a) = (\xi^2 - 4\alpha) (D_g(\xi, \alpha) + a)
\]
for odd $g$ and
\[
\eta^2 = (\xi - 2\sqrt{\alpha}) (D_g(\xi, \alpha) + a)
\]
when $g$ is even. This proves \eqref{eq:X_odd_omega_sigma} and \eqref{eq:X_even_omega_sigma}.

The case \eqref{eq:X_even_sigma} can be proved in the same way by taking $\xi = x + \frac{\alpha}{x}$ and $\eta = \frac{y}{x^\frac{g}{2}} \left( 1 + \frac{\sqrt{\alpha}}{x} \right)$.
\end{proof}

In the case of \eqref{eq:X_even_sigma} and \eqref{eq:X_even_omega_sigma} the automorphism $\sigma$ is defined over $\mathbb{F}_q[\sqrt{\alpha}]$, so the quotient maps and quotient curves are defined over this field. A map $(x,y) \mapsto (-x, i y)$ is an isomorphism of the curves. Therefore, the curves are twists of degree $2$ to each other.

Since the curve $X$ has the non-hyperelliptic involution $\sigma$, we can split the Jacobian by using the method of Kani-Rosen \cite{KaniRosen1989,Paulhus2007}:
\[
J_X \sim J_{X/\left< \sigma \right>} \times J_{X/\left< \omega  \sigma \right>}.
\]

The equations for quotients are known from the Theorem \ref{th:X_quotients}. Therefore, the problem of point counting on the curve $X$ is reduced to counting points on the quotients. 

The curve $C$ is a curve $X$ with $\alpha = \sqrt[g]{b}$. Automorphisms are defined over the field $\mathbb{F}_q[\sqrt[g]{b}]$ or $\mathbb{F}_q[\sqrt[2g]{b}]$, therefore we have decomposition
\[
J_C(\mathbb{F}_q[\sqrt[g]{b}]) \sim J_{X/\left<\sigma\right>}(\mathbb{F}_q[\sqrt[g]{b}]) \times J_{X/\left<\sigma \omega\right>}(\mathbb{F}_q[\sqrt[g]{b}])
\]
for odd $g$ and
\[
J_C(\mathbb{F}_q[\sqrt[2g]{b}]) \sim J_{X/\left<\sigma\right>}(\mathbb{F}_q[\sqrt[2g]{b}]) \times J_{X/\left<\sigma \omega \right>}(\mathbb{F}_q[\sqrt[2g]{b}])
\]
in case of even $g$.

In addition, for odd genus we have a map $C \to E$ given by $(x,y) \mapsto (x^g, y x^{\frac{g-1}{2}})$, where $E$ is an elliptic curve with equation
\[
y^2 = x^3 + a x^2 + b x.
\]
This map is in fact a quotient map by automorphism $r^2$ for $r$ from Proposition \ref{th:gen_aut_group_of_c_rho}. So, the Jacobian $J_C$ is always split in this case and we have $J_C \sim E \times A$, where $A$ is an abelian variety. Therefore, counting points on the curves of odd genus is reduced to counting points on the abelian variety $A$.

In case of $g = 2$ the decomposition is similar to the decomposition by Satoh \cite{Satoh2009}, but it obtained with different method.

It remains to compute $\#J_C(\mathbb{F}_q)$ from $\#J_C(\mathbb{F}_q[\sqrt[g]{b}])$ for odd $g$ and from $\#J_C(\mathbb{F}_q[\sqrt[2 g]{b}])$ in the even case.

We present a method to compute $\#J_C(\mathbb{F}_q)$ in Section \ref{sec:chi_C_computation}.

\section{Cartier-Manin matrix of the curve $C$}
\label{sec:CM_matrices}
Let $p>2$ and $X$ be a genus $g$ hyperelliptic curve defined by equation
\[
y^2 = f(x),
\]
where $f$ is monic and $\deg{f} = 2g+1$. Denote by $c_i$ coefficients in expansion of $f(x)^{\frac{p-1}{2}}$. A matrix 
\[
W = (w_{i,j}) = (c_{i p - j})
\]
is called Cartier-Manin matrix of the curve $X$.
Let \[W_p = (W^t) \cdot (W^t)^{(p)} \cdot ... \cdot (W^t)^{(p^{n-1})},\] where $(W^t)^{(p^i)}$ denotes a matrix obtained from $W^t$ by raising each of its elements to $p^i$-th power.
Then we have a formula \cite{Manin1961,Yui1978}:
\begin{equation}
\label{eq:Manin_formula}
\chi_{X,q}(T) \equiv T^g |TI - W_p| \pmod{p}.
\end{equation}

In this section we show how to compute this matrices for finite fields of big characteristic, i.e. for finite fields with $p > 2^{160}$.

It is known that the number of points on certain elliptic curves can be expressed through Legendre polynomials. Therefore, some instances of the polynomials from \cite[Table 1,2]{Novoselov2017} can be computed for finite fields of big characteristic using the Schoof-Elkies-Atkin algorithm (see \cite{Schoof1995} and \cite[\S17.2.2]{CohenFrey+2005}). We collect such cases in the following theorem.

\begin{theorem}[\cite{Sun2013,Sun2013b,Sun2013c}]
	\label{th:LP_EC_congruences}
	Let $c \in \mathbb{F}_p, p > 3$. Then
	\begin{enumerate}
		\item $P_{\frac{p-1}{2}}(c) \equiv (\frac{-6}{p}) t_2 \pmod{p}$, where $t_2$ is a trace of Frobenius of an elliptic curve:
		\[
		E_2: y^2 = x^3 - 3(c^2 + 3) x + 2 c (c^2 - 9).
		\]
		\item $P_{\lfloor\frac{p}{3}\rfloor}(c) \equiv (\frac{p}{3}) t_3 \pmod{p}$, where $t_3$ is a trace of Frobenius of an elliptic curve
		\[
		E_3: y^2 = x^3 + 3 (4c-5) x + 2(2c^2 - 14 c + 11).
		\]
		\item $P_{\lfloor\frac{p}{4}\rfloor}(c) \equiv (\frac{6}{p}) t_4 \pmod{p}$, where $t_4$ is a trace of Frobenius of an elliptic curve
		\[
		E_4: y^2 = x^3 - \frac{3}{2}(3 c + 5) x + 9 c + 7.
		\]
		\item $P_{\lfloor\frac{p}{6}\rfloor}(c) \equiv (\frac{3}{p}) t_6 \pmod{p}$, where $p>5$ and $t_6$ is a trace of Frobenius of an elliptic curve
		\[
		E_6: y^2 = x^3 - 3x + 2c.
		\]
	\end{enumerate}
\end{theorem}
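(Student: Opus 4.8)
\section*{Proof sketch}

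The four assertions share a common structure, so the plan is to treat them by a single method and to track the quadratic-character prefactors separately in each case. The starting point is the Cartier--Manin formula \eqref{eq:Manin_formula} specialized to an elliptic curve $E_i : y^2 = f_i(x)$ with $\deg f_i = 3$: in genus $1$ the matrix $W$ is the scalar $c_{p-1}$, the coefficient of $x^{p-1}$ in $f_i(x)^{(p-1)/2}$, so that comparing $T(T - c_{p-1})$ with $T^2 - t_i T + p \equiv T(T - t_i) \pmod p$ gives
\[
t_i \equiv c_{p-1} \pmod{p}.
\]
The first step, therefore, is to expand $f_i(x)^{(p-1)/2}$ for each $i$ and read off the coefficient of $x^{p-1}$ as an explicit sum of products of binomial coefficients in the parameter $c$.

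The second step is to bring this coefficient into the shape of a Legendre polynomial. I would use the representation $P_n(x) = \sum_{k=0}^n \binom{n}{k}\binom{n+k}{k}\left(\frac{x-1}{2}\right)^k$ together with the generating identity $\sum_k \binom{m}{k}^2 z^k = (1-z)^m P_m\!\left(\frac{1+z}{1-z}\right)$, reducing binomial coefficients modulo $p$ by Lucas' theorem. For $E_2$ the cubic factors as $(x+2c)(x-c-3)(x-c+3)$, so the curve is a quadratic twist of a Legendre-form curve $y^2 = x(x-1)(x-\lambda)$ with $\lambda$ a linear-fractional function of $c$; the coefficient extraction then produces $\sum_k \binom{(p-1)/2}{k}^2 z^k$, and choosing $z$ with $\frac{1+z}{1-z} = c$ turns this into $P_{(p-1)/2}(c)$ up to powers of $2$ and of $c+1$. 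Those powers, raised to $\frac{p-1}{2}$, contribute quadratic characters via Euler's criterion, and combine with the twisting factor (the $c$-dependent characters cancelling) to leave exactly the prefactor $\left(\frac{-6}{p}\right)$.

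For the remaining cases the Legendre index is $\lfloor p/3\rfloor$, $\lfloor p/4\rfloor$, $\lfloor p/6\rfloor$ rather than $\frac{p-1}{2}$, and this is where the main difficulty lies. Writing $\lfloor p/k\rfloor = (p-r)/k$ with $r = p \bmod k$, I would evaluate $P_{\lfloor p/k\rfloor}(c) \bmod p$ through its binomial sum, splitting each binomial coefficient at multiples of $p$ via Lucas' theorem; the surviving terms collapse to a central-binomial-type sum, which I then identify with the coefficient $c_{p-1}$ of the corresponding $f_i(x)^{(p-1)/2}$. The curves $E_3, E_4, E_6$ are precisely the Weierstrass models whose Hasse invariants realize these sums, reflecting the extra automorphisms (the CM-type structure attached to the indices $3,4,6$), and the residue $r$ is what forces the hypotheses $p>3$ and $p>5$. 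The bookkeeping of which residues survive the Lucas reduction, and the matching of the leftover normalizing constants to the characters $\left(\frac{p}{3}\right)$, $\left(\frac{6}{p}\right)$, $\left(\frac{3}{p}\right)$, is the most delicate part; once the central sums are matched, the identification with $t_i$ is immediate from the Hasse-invariant computation of the first step.
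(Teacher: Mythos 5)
You should first be aware that the paper contains no proof of this statement to compare against: Theorem~\ref{th:LP_EC_congruences} is quoted from the cited works of Sun and used purely as a black box (to evaluate $P_{\frac{p-1}{2}}$, $P_{\lfloor p/6\rfloor}$ via SEA), so your attempt stands or falls on its own. Your case 1 does stand: the factorization $x^3-3(c^2+3)x+2c(c^2-9)=(x+2c)(x-c-3)(x-c+3)$ is correct; translating by $2c$ and rescaling by $3(c+1)$ puts $E_2$ in Legendre form with $\lambda=\frac{c-1}{c+1}$; the Deuring/Hasse-invariant congruence $a_p\equiv(-1)^{\frac{p-1}{2}}\sum_k\binom{(p-1)/2}{k}^2\lambda^k\pmod p$ together with your generating identity (note $\tfrac{1+\lambda}{1-\lambda}=c$) and Euler's criterion gives $t_2\equiv\left(\tfrac{-1}{p}\right)\left(\tfrac{6}{p}\right)\left(\tfrac{(c+1)^2}{p}\right)P_{\frac{p-1}{2}}(c)$, which is exactly the prefactor $\left(\tfrac{-6}{p}\right)$. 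So the first assertion is proved, modulo routine computation, by your method.

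Cases 2--4, however, contain a genuine gap, located exactly where the theorem is difficult. For $n=\lfloor p/k\rfloor$ with $k\in\{3,4,6\}$, every binomial entry appearing in $\sum_j\binom{n}{j}\binom{n+j}{j}\left(\tfrac{c-1}{2}\right)^j$ is smaller than $p$ (for instance $n+j\le 2\lfloor p/6\rfloor<p/2$), so Lucas' theorem, which you invoke to ``split each binomial coefficient at multiples of $p$'', is vacuous here: there are no base-$p$ digits to split and no terms are killed. The mechanism that actually works is the rational-index manipulation $n\equiv -\tfrac13,-\tfrac23,-\tfrac14,\ldots\pmod p$, which converts $\binom{n}{j}\binom{n+j}{j}$ into central-binomial products of the shape $\binom{2j}{j}\binom{3j}{j}27^{-j}$; and then one must still prove that the resulting truncated sum agrees modulo $p$ with the Hasse invariant $[x^{p-1}]f_i(x)^{(p-1)/2}$ of the \emph{specific} models $E_3$, $E_4$, $E_6$ --- equivalently with the character sum $-\sum_x\left(\tfrac{f_i(x)}{p}\right)$. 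Your sketch asserts this identification (``the curves $E_3,E_4,E_6$ are precisely the Weierstrass models whose Hasse invariants realize these sums'') rather than proving it, but that assertion \emph{is} the theorem being quoted; the prefactors $\left(\tfrac{p}{3}\right)$, $\left(\tfrac{6}{p}\right)$, $\left(\tfrac{3}{p}\right)$ cannot even be checked until it is supplied. As written, only assertion 1 of the four is actually established.
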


Using this theorem we can compute Cartier-Manin matrix of curve $C'$ of genus $3$ completely. For the case of $g > 3$ we get partial information. For example, the polynomial $P_{\frac{p-1}{2}}$ appears in formulae for $g = 5, 7$ in \cite[Table 1,2]{Novoselov2017}. Also, we can obtain new congruences for Legendre polynomials using tables from \cite{Novoselov2017} and decomposition of $J_{C'}$ from Section \ref{sec:jacobian_decomposition}.

To find a Cartier-Manin matrix of the curve $C$, we connect this matrix with the matrix of the curve $C'$ using a theorem.

\begin{theorem}
	\label{th:W_for_C_from_Ct}
	Let $C: y^2 = x^{2g+1} + a x^{g+1} + bx$ be a genus $g$ hyperelliptic curve defined over a finite field $\mathbb{F}_q, q = p^n, p>2, \gcd(p,g)=1$ and let $W = (w_{i,j}(a,b))$ be a Cartier-Manin matrix of $C$, and $m = \frac{p-1}{2}$. Then
	\begin{enumerate}
		\item
		\label{th:W_for_C_from_Ct_1}
		$w_{i,j}(a,b) = b^{m + \frac{m-(ip-j)}{2g}} w_{i,j}\left(\frac{a}{\sqrt{b}}, 1\right) = b^{m + \frac{m-(ip-j)}{2g}} P_{\frac{ip-j}{g} - \frac{p-1}{2g}}(-\frac{a}{2\sqrt{b}})$ for $ip-j \equiv m \pmod{g}$ and
		\item $w_{i,j} = 0$, otherwise.
	\end{enumerate}
\end{theorem}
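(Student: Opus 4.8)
The plan is to reduce the curve $C$ to the normalized curve $C': y^2 = x^{2g+1} + c\,x^{g+1} + x$ with $c = a/\sqrt{b}$ by a scaling of the variable, to read the vanishing pattern off the shape of the defining polynomial, and to import the Legendre-polynomial description of the Cartier-Manin matrix of $C'$ from \cite{Novoselov2017}. Throughout I write $f(x) = x^{2g+1} + a x^{g+1} + b x = x\,(x^{2g} + a x^g + b)$, set $m = \frac{p-1}{2}$, and use that by definition $w_{i,j}(a,b) = c_{ip-j}(a,b)$, where $f(x)^m = \sum_k c_k(a,b)\, x^k$.

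First I would dispose of part (2). Because $f(x)^m = x^m (x^{2g} + a x^g + b)^m$ and every monomial of $(x^{2g}+a x^g+b)^m$ has exponent a multiple of $g$, every exponent occurring in $f(x)^m$ is congruent to $m$ modulo $g$. Hence $c_k(a,b) = 0$ whenever $k \not\equiv m \pmod g$, and with $k = ip-j$ this gives $w_{i,j} = 0$ outside the stated congruence class.

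For the first equality of part (1) I would use a change of variable. With $\beta = \sqrt[2g]{b}$ one checks coefficient by coefficient that
\[
f(\beta x) = b\beta\,\bigl(x^{2g+1} + c\,x^{g+1} + x\bigr), \qquad c = \frac{a}{\sqrt{b}},
\]
so that $f(\beta x)^m = (b\beta)^m \bigl(x^{2g+1}+c\,x^{g+1}+x\bigr)^m$. Comparing the coefficient of $x^k$ on the two sides yields $c_k(a,b)\,\beta^{k} = (b\beta)^m c_k(c,1)$, hence
\[
c_k(a,b) = b^m \beta^{\,m-k} c_k(c,1) = b^{\,m + \frac{m-k}{2g}} c_k(c,1).
\]
Taking $k = ip-j$ is the asserted identity $w_{i,j}(a,b) = b^{\,m + \frac{m-(ip-j)}{2g}}\,w_{i,j}(a/\sqrt{b},1)$. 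On the relevant indices $k \equiv m \pmod g$, so the exponent of $b$ is a half-integer; I would record that these half-integer powers of $b$ combine with the powers of $\sqrt{b}$ hidden in the argument of the Legendre polynomial below, so that the product lies in $\mathbb{F}_q$ and is independent of the choice of root.

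It remains to identify $c_k(c,1)$ with a Legendre value, which is essentially the content of \cite{Novoselov2017}. Writing $u = x^g$ and $k = m + g\ell$ with $\ell = \frac{ip-j}{g} - \frac{p-1}{2g}$, the quantity $c_k(c,1)$ is the coefficient of $u^\ell$ in $(u^2 + c u + 1)^m = (1 - 2tu + u^2)^m$ with $t = -c/2$. Since $m \equiv -\tfrac12 \pmod p$, the termwise congruence $\binom{m}{k} \equiv \binom{-1/2}{k}\pmod p$ lets one match this coefficient with that of the generating function $(1 - 2tu + u^2)^{-1/2} = \sum_{\ell} P_\ell(t)\,u^\ell$, giving $c_k(c,1) \equiv P_\ell(-c/2)\pmod p$. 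I expect the main obstacle to be here: the termwise comparison is only licit while $\ell \le m$, so for the larger indices one must first use the palindromy $c_{m+g\ell}(c,1) = c_{m+g(2m-\ell)}(c,1)$ coming from the self-reciprocity of $(u^2+c u+1)^m$, and then the congruence $P_{\,p-1-\ell}(t) \equiv P_\ell(t)\pmod p$ to return to the index $\ell$ claimed in the theorem. Combining this with the first equality completes the proof.
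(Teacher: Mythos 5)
Your proof is correct, and it reaches the theorem by a genuinely different (and cleaner) route than the paper. The paper proves the reduction $w_{i,j}(a,b) = b^{m+\frac{m-(ip-j)}{2g}}\,w_{i,j}(a/\sqrt{b},1)$ by expanding $f(x)^m$ multinomially, solving the exponent constraint $(2g+1)k_0+(g+1)k_1+k_2 = ip-j$, $k_0+k_1+k_2=m$ explicitly to collapse the expansion into a single sum over $k_0$, and then factoring the powers of $b$ out of that sum; part (2) is implicit there (the constraint has no solutions off the congruence class). You obtain the same identity in one line from the substitution $x \mapsto \beta x$ with $\beta^{2g}=b$, which also makes the vanishing in part (2) transparent from $f(x)^m = x^m(x^{2g}+ax^g+b)^m$; this is a more conceptual homogeneity argument, at the cost of briefly passing through the extension $\mathbb{F}_q[\sqrt[2g]{b}]$, which you correctly note is harmless. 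For the second equality, the paper simply cites Theorem 3 of \cite{Novoselov2017}, whereas you re-derive the Legendre identification from the generating function $(1-2tu+u^2)^{-1/2}=\sum_\ell P_\ell(t)u^\ell$, so your proof is self-contained where the paper's is not. One small remark on your claimed obstacle: the termwise comparison is in fact licit for all $\ell < p$, not only $\ell \le m$, because $\binom{-1/2}{k} \equiv \binom{m}{k} \pmod{p}$ holds for all $0 \le k < p$ (for $m < k < p$ the product $m(m-1)\cdots(m-k+1)$ contains the factor $0$, matching $\binom{m}{k}=0$), so no contribution beyond $k \le \ell < p$ ever enters the coefficient of $u^\ell$; your detour through the palindromy of $(u^2+cu+1)^m$ and the classical congruence $P_{p-1-\ell}(t) \equiv P_\ell(t) \pmod{p}$ is nevertheless valid, just unnecessary.
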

\begin{proof}
	\begin{align*}
	w_{i,j}(a,b)
	&=
	[x^{ip-j}](x^{2g+1} + a x^{g+1} + bx)^{\frac{p-1}{2}}
	=\\&=
	[x^{ip-j}] \sum\limits_{k_0+k_1+k_2=m}\binom{m}{k_0,k_1,k_2} x^{(2g+1)k_0 + (g+1)k_1 + k_2} a^{k_1} b^{k_2}
	=\\&=
	\sum\binom{m}{k_0,k_1,k_2} a^{k_1} b^{k_2},
	\end{align*}
	where the sum goes all integers $k_0,k_1,k_2 \geq 0$ such that
	\[
	\begin{cases}
	k_0+k_1+k_2=m, \\
	(2g+1)k_0 + (g+1)k_1 + k_2=ip-j.
	\end{cases}
	\]
	Therefore,
	\begin{align*}
	w_{i,j}(a,b)
	&=
	\sum\limits_{k_0 \geq 0}\binom{m}{k_0, \frac{ip-j-m}{g} - 2 k_0, m + \frac{m-(ip-j)}{g} + k_0}
	a^{\frac{ip-j}{g} - 2 k_0} b^{m+\frac{m-(i p - j)}{g} + k_0}
	=\\&=
	b^{m + \frac{m-(ip-j)}{2g}}\sum\limits_{k_0 \geq 0}\binom{m}{k_0, \frac{ip-j-m}{g} - 2 k_0, m + \frac{m-(ip-j)}{g} + k_0}
	\left(\frac{a}{\sqrt{b}}\right)^{\frac{ip-j}{g} - 2 k_0}
	=\\&=
	b^{m + \frac{m-(ip-j)}{2g}} w_{i,j}\left(\frac{a}{\sqrt{b}}, 1\right).
	\end{align*}
	
	By Theorem 3 from \cite{Novoselov2017} we have
	$w_{i,j}(a,b) = b^{m + \frac{m-(ip-j)}{2g}} P_{\frac{ip-j}{g} - \frac{p-1}{2g}}$ for $ip-j \equiv \frac{p-1}{2} \pmod{g}.$
\end{proof}
This theorem allows us to compute Cartier-Manin matrix for the curve $C$ from the matrix of the curve $C'$. Note that we should work in the field $\mathbb{F}_{q^2}$ in the case of $\sqrt{b} \not\in \mathbb{F}_q$, but the result belongs to $\mathbb{F}_{q}$.

The curves $C$ and $C'$ are isomorphic over the field $\mathbb{F}_{q}[\sqrt[4g]{b}]$, but the theorem works even in the case when the curves are not isomorphic over $\mathbb{F}_q$.

Now, we can derive all possibilities for $\chi_C(T) \pmod{p}$ using the same method as in \cite{Novoselov2017}. First, it follows from Theorem \ref{th:W_for_C_from_Ct} that Cartier-Manin matrix $W$ of $C$ is (generalized) permutation matrix with permutation $\sigma$ defined by
\[
\sigma(i) \equiv ip - \frac{p-1}{2} \pmod{g}.
\]
A decomposition of $\sigma = \sigma_1 \cdot ... \cdot \sigma_k$ into disjoint cycles corresponds to factorization of the characteristic polynomial of the matrix $W$:
\begin{equation}
\label{eq:chi_factorization_from_permutation}
  \chi_W(T)
=
\prod_{i=1}^{k}\left( T^{|\sigma_i|} - \prod_{j=1}^{|\sigma_i|} w_{\sigma_i(j), \sigma_i(j+1)} \right).
\end{equation}

The same result holds for the matrix $W_p$ if we take $\sigma \equiv i p^n - \frac{p^n-1}{2} \pmod{p}$.

By formula \eqref{eq:Manin_formula}, we have
\begin{equation}
\label{eq:Manin_formula_2}
\chi_{C,q}(T) \equiv T^g \chi_{W_p}(T) \pmod{p}.
\end{equation}

By combining \eqref{eq:Manin_formula_2} with Theorem \ref{th:W_for_C_from_Ct} we can now express $\chi_{C,q}(T) \mod{p}$ in terms of Legendre polynomials.

Fixing genus $g$ and a number $s$, such that $p \equiv s \pmod{g}$  for odd genus $g$ and $p \equiv s \pmod{2g}$ in case of $g$ is even, we fix permutation $\sigma$. So, for each genus we have $g$ variants for $\chi_C(T) \pmod{p}$. We enumerate them all in Table \ref{table:C_frobs_mod_p} for $g = 1-7$. The polynomials are given in factored form (over extension of $\mathbb{F}_q$), but all coefficients belong to $\mathbb{F}_p$ after expansion.

\textit{Congruences for Legendre polynomials.} From Section \ref{sec:jacobian_decomposition}, we have $J_{C'}(\mathbb{F}_q) \sim J_{X'_1} \times J_{X'_2}$. Therefore, \[
\chi_{C',q}(T) = \chi_{X'_1, q}(T) \chi_{X'_2, q}(T).
\]
Combining this with \eqref{eq:Manin_formula_2} we obtain
\[
T^g \chi_{W_p}(T) \equiv \chi_{X'_1, q}(T) \chi_{X'_2, q}(T) \pmod{p}.
\]
A number of congruences for Legendre polynomials $P_{\frac{ip-j}{g} - \frac{p-1}{2g}}$ can be found by comparing coefficients of polynomials from two sides of this equation. We give an example of such congruences in Section \ref{sec:pc_genus4}.

The Jacobians $J_{X'_1}$ and $J_{X'_2}$ are "generically" absolutely simple \cite[Cor.6]{TauTopVer1991}. So, we have obtained a connection of characteristic polynomials of absolutely simple abelian varieties with Legendre polynomials.

Previous results \cite{BrillhartMorton2004,Sun2013,Sun2013b,Sun2013c} connect Legendre polynomials with elliptic curves. But transition to hyperelliptic curves gives us more results.

\section{Computing the order of $J_C(\mathbb{F}_q)$.}
\label{sec:chi_C_computation}
Since we know the decomposition of Jacobian over the extension of finite field, it remains to compute $\chi_{C,q}(T)$ from $\chi_{C,q^k}(T)$, where $k$ such that $\mathbb{F}_q[\sqrt[g]{b}] \simeq \mathbb{F}_{q^k}$ for odd $g$ and $\mathbb{F}_q[\sqrt[2g]{b}] \simeq \mathbb{F}_{q^k}$ for even $g$.

We do this by using a formula for $L$-polynomials from \cite[p.195]{Stichtenoth2009}:
\begin{equation}
\label{eq:L_from_Lk}
L_{C,q^k}(T^k) = \prod_{\zeta^k=1} L_{C,q}(\zeta T).
\end{equation}

By comparing coefficients, we obtain a system of $2gk$ equations of $g$ unknowns $a_1, ..., a_g$.

This system is big in general, so to optimize process we adopt a step-by-step method for genus $2$ case from \cite{GuillevicVergnaud2012}. This gives us an Algorithm \ref{alg:general}.

\begin{algorithm}[H]
	\SetAlgoLined
	\caption{Computation of $\chi_{C,q}(T)$ for hyperelliptic curve $C: y^2 = x^{2g+1} + a x^{g+1} + b x$.
		\label{alg:general}
	}
	\KwIn{$a,b \in \mathbb{F}_q$, $p > 2$.}
	\KwOut{Coefficients $(a_{1}, ..., a_{g})$ of $\chi_{C,q}(T)$.}
	\nl Let $k$ be such that $\mathbb{F}_q[\sqrt[g]{b}] \simeq \mathbb{F}_{q^k}$ if $g$ is odd and $\mathbb{F}_q[\sqrt[2 g]{b}] \simeq \mathbb{F}_{q^k}$ if $g$ is even\;
	\nl Let $k = k_1 \cdot ... \cdot k_m$ be a prime factorization of $k$ with $k_1 \leq ... \leq k_m$\;
	\nl $n \leftarrow k$\;
	\nl \label{alg:gen:step_computation_of_X_i}
	Compute $L_{X_1,q^n}(T)$ and $L_{X_2,q^n}(T)$ for $X_1 := X/\left<\sigma\right>$ and $X_2 := X/\left<\sigma \omega\right>$ from Theorem \ref{th:X_quotients}\;
	\nl $L_{C,q^n}(T) \leftarrow L_{X_1,q^n}(T) L_{X_2,q^n}(T)$\;
	\nl $list \leftarrow \{ (a_{1,n}, ..., a_{g,n}) \}$\;
	\nl \For{$j$ from $1$ to $m$}{
		\nl $i \leftarrow \frac{n}{k_j}$\;
		\nl $S \leftarrow \{\}$\;
		\nl \ForEach{$(a_{1,n}, ..., a_{g,n}) \in list$}{
			\nl \label{alg:gen:step_seq} Obtain a system of equations for coefficients $a_{1,i}, ..., a_{g,i}$ of $L_{C,q^i}(T)$ from the formula $L_{C,q^n}(T^{k_j}) = \prod_{\zeta^{k_j}=1}L_{C,q^i}(\zeta T)$\;
			\nl Solve the system using inequalities $|a_{m,i}| \leq \binom{2g}{m} q^{\frac{i m}{2}}$ for $m = 1,...,g$ to obtain a list $S'$ of possible tuples $(a_{1,i}, ..., a_{g,i})$\;
			\nl Exclude extra solutions from $S'$ by multiplying random points of $J_C(\mathbb{F}_{q^i})$ by $1 + a_{1,i} + ... a_{g,i} + a_{1,i} q^i + ... + a_{g,i} q^{g i}$\;
			\nl $S \leftarrow S \cup S'$\;
		}
	\nl $list \leftarrow S$\;
	\nl $n \leftarrow i$\;
	}
	\Return $list[1]$\;
\end{algorithm}

The system of equations in the step \ref{alg:gen:step_seq} can be precomputed for fixed genus. It contains at most $2gk_j$ equations with $g$ unknowns of degree $k_j$.

To solve the system, we use resultants to eliminate variables $a_{2,i}, ..., a_{g,i}$. This gives us one polynomial in variable $a_{1,i}$. To find roots of this polynomial, we factor it over $\mathbb{F}_l$ (for prime $l > 4g\sqrt{q}$) in the same way as it is done in \cite{Satoh2009} (for $g=2$). For each solution $a_{1,i}$ we substitute it in previous equation which depends only on $a_{1,i}$ and some other variable $a_{m,i}$. Then we factor resulting polynomial to find possible solutions for $a_{m,i}$ and so on. In the end, we obtain a list of possible tuples $(a_{1,i}, ..., a_{g,i})$. To exclude extra solutions, we first use Hasse-Weil bound and after that we eliminate remaining solutions by choosing random points on $J_C(\mathbb{F}_{q^i})$ and multiplying candidates for $\#J_C(\mathbb{F}_{q^i})$ by it.

For curves with simple Jacobians this gives us an unique solution in the end of algorithm. For curves with non-simple Jacobian the algorithm returns result up to twists of factors in the Jacobian decomposition.

\textit{Twists.} To compute $L_{X_1,q^n}(T)$ and $L_{X_2,q^n}(T)$ in step \ref{alg:gen:step_computation_of_X_i}, we first compute characteristic polynomials for the twists of $X_1, X_2$, defined over $\mathbb{F}_q[\sqrt{b}]$. After that, we determine $L_{X_1,q^n}(T)$ and $L_{X_2,q^n}(T)$ from characteristic polynomials of twists using precomputed systems of equations for the step \ref{alg:gen:step_seq}.

The twists for even $g$ are given by equations
\[
\tilde{X}_{1,2}: y^2 = (x \pm 2) (D_g(x,1) + a/\sqrt{b}).
\]
For odd $g$, we have
\[
\tilde{X}_1: y^2 = D_g(x, 1) + a / \sqrt{b}
\]
and
\[
\tilde{X}_{2}: y^2 = (x^2 - 4) (D_g(x, 1) + a / \sqrt{b}).
\]

\begin{remark}
The curve $\tilde{X}_1$ is a curve with explicit real multiplication \cite[\S7.1.1]{Abelard2018}. Due to recent result of Abelard \cite{Abelard2018b} the problem of counting points on this curve has complexity $O(\log^8{q})$  for any fixed genus $g$.
\end{remark}

We provide implementation of the Algorithm \ref{alg:general} for the case of $g = 4$ in Section \ref{sec:pc_genus4}. The algorithm for $g = 3$ in Section \ref{sec:pc_genus3} uses Cartier-Manin matrices instead.

\section{Genus 3}
\label{sec:pc_genus3}
In this case, we have $J_C(\mathbb{F}_q) \sim E \times A$ for some abelian variety $A$ of dimension $2$ and to determine $\chi_{C,p}(T)$ we have to find a characteristic polynomial $\chi_{A,p}(T)$ of $A$. Let \[
\chi_{A,p}(T) = T^4 - b_1 T^3 + b_2 T^2 - b_1 p T + p^2.\]
To find coefficients $b_1, b_2$, we compute $\chi_{A,p}(T) \pmod{p}$ by using results from Section \ref{sec:CM_matrices}. After that, we determine coefficients $b_1, b_2$ by using inequalities $|b_1| \leq 4 \sqrt{p}$ and $|b_2| \leq 6 p$ in the same way as in \cite[Alg.1]{FurKawTak2003}. To do this, we make a list of candidates for $b_1,b_2$ and determine the right one by multiplying random point in $J_C(\mathbb{F}_p)$ by candidate for $\#J_C(\mathbb{F}_p) = 1 + p^2 + b_1 (p + 1) + b_2$. 

The computation of $\chi_{C,p}(T)$ is as follows. From Table \ref{table:C_frobs_mod_p} we have
\[
\chi_{C,p}(T) \equiv T^3 (T - b_2 P_{\frac{p - 1}{2}}) (T - b_6^5 P_{\frac{p - 1}{6}}) (T - b_6 P_{\frac{p - 1}{6}}) \pmod{p}
\]
for $p \equiv 1 \pmod{3}$ and
\[
\chi_{C,p} \equiv T^3 (T - b_2 P_{\frac{p - 1}{2}}) (T^2 - b_2^2 P_{\frac{p - 5}{6}}^2) \pmod{p}
\]
for $p \equiv 2 \pmod{3}$.
Computation of Legendre polynomials $P_\frac{p-1}{2},P_\frac{p-1}{6},P_\frac{p-5}{6}$ in case of $\sqrt{b} \in \mathbb{F}_p$ can be done by Theorem \ref{th:LP_EC_congruences} using SEA-algorithm.

In case of $b \not\in \mathbb{F}_p$ values of our Legendre polynomials are not in $\mathbb{F}_p$. So in this case, we compute $\chi_{C,p^2}(T) \pmod{p}$ and restore $\chi_{C,p}(T) \pmod{p}$ from this polynomial by solving system
\begin{equation}
\label{eq:g2_L_from_L2_mod_p}
b_{1,2} \equiv b_1^2 - 2 b_2,~
b_{2,2} \equiv b_2^2 \pmod{p}.
\end{equation}

By using method described in Section \ref{sec:CM_matrices}, we obtain
\[
\chi_{C,p^2}(T) \equiv
T^3
(T - \sqrt{b}^{\frac{p^2-1}{2}} P_{\frac{p - 1}{2}}^{p + 1})
(T - \sqrt{b}^{\frac{5 p^2 - 5}{6}} P_{\frac{p - 1}{6}}^{p + 1})
(T - \sqrt{b}^{\frac{p^2 - 1}{6}} P_{\frac{p - 1}{6}}^{p + 1})
\pmod{p}
\]
for $p \equiv 1 \pmod{3}$
and
\[
\chi_{C,p^2}(T) \equiv
T^3
(T - \sqrt{b}^{\frac{p^2 - 1}{2}} P_{\frac{p - 1}{2}}^{p + 1})
(T - \sqrt{b}^{\frac{5 p^2 - 5}{6}} P_{\frac{p - 5}{6}}^{p + 1})
(T - \sqrt{b}^{\frac{p^2 - 1}{6}} P_{\frac{p - 5}{6}}^{p + 1})
\pmod{p}.
\]
in case of $p \equiv 2 \pmod{3}$.

The computation of $P_{\frac{p - 1}{2}}^{p + 1}, P_{\frac{p - 1}{6}}^{p + 1}, P_{\frac{p - 5}{6}}^{p + 1}$ is equivalent to computing of Frobenius traces $t_{2,2}, t_{6,2}$ for elliptic curves $E_2, E_6$ over $\mathbb{F}_{p^2}$. It can be done using SEA-algorithm.
After solving \eqref{eq:g2_L_from_L2_mod_p} the determination of $\chi_{C,p}(T)$ from $\chi_{C,p}(T) \pmod{p}$ is the same as for the case $\sqrt{b} \in \mathbb{F}_p$. However, in this case we have more possible candidates.

The described method leads to Algorithm \ref{alg:genus_3}.

\begin{algorithm}[H]
	\SetAlgoLined
	\caption{Computation of $\chi_{C,p}(T)$ for hyperelliptic curve $C: y^2 = x^7 + a x^4 + b x$.
		\label{alg:genus_3}
	}
	\KwIn{$a,b \in \mathbb{F}_p$, $p > 2$.}
	\KwOut{$\chi_{C,p}(T)$.}
	
	\nl Compute a trace of Frobenius $t_2$ for elliptic curve $y^2 = x^3 + a x^2 + b x$ over $\mathbb{F}_p$\;

	\nl \eIf{$\sqrt{b} \in \mathbb{F}_p$}{
		\nl $c \leftarrow -\frac{a}{2\sqrt{b}} \in \mathbb{F}_p$ \;
		\nl Compute a trace of Frobenius $t_6$ for elliptic curve $E_6: y^2 = x^3 - 3x + 2c$ over $\mathbb{F}_p$\;
		\nl \uIf{$p \equiv 1 \pmod{3}$}{
			\nl	$\tilde{b}_1 \leftarrow -(\frac{3}{p}) t_6 (\sqrt{b}^{\frac{5p-5}{6}} +\sqrt{b}^{\frac{p-1}{6}})$\;
			\nl $\tilde{b}_2 \leftarrow t_6^2$\;
		}{
			\nl \uElseIf{ $p \equiv 2 \pmod{3}$ }{
				\nl $\tilde{b}_1 \leftarrow 0$\;
				\nl $\tilde{b}_2 \leftarrow - t_6^2$\;
			}
		}
		\nl $\chi_{A,p}(T) \pmod{p} \leftarrow T^4 + \tilde{b}_1 T^3 + \tilde{b}_2 T^2 + \tilde{b}_1 p T + p^2$\;
		\nl Compute $\chi_{A,p}(T)$ from $\chi_{A,p}(T) \pmod{p}$\;
		\Return{$(T^2 - t_6 T + p) \chi_{A,p}(T)$.}
	}{
		\nl $c \leftarrow -\frac{a}{2\sqrt{b}} \in \mathbb{F}_{p^2}$ \;
		\nl Compute a trace of Frobenius $t_{6,2}$ for elliptic curve $E_6: y^2 = x^3 - 3x + 2c$ over $\mathbb{F}_{p^2}$\;
		\nl $\tilde{b}_{1,2} \leftarrow 
			\pm t_{6,2}(\sqrt{b}^{\frac{5p^2-5}{6}}+\sqrt{b}^{\frac{p^2-1}{6}})$ \;
		\nl $\tilde{b}_{2,2} \leftarrow t_{6,2}^2$\;
		\nl $\chi_{A,p^2}(T) \pmod{p} \leftarrow T^4 + \tilde{b}_{1 ,2}T^3 + \tilde{b}_{2,2} T^2 + \tilde{b}_{1,2} p^2 T + p^4$\;
		\nl Compute $\chi_{A,p}(T)$ from $\chi_{A,p^2}(T) \pmod{p}$\;
	}
	\Return $(T^2 - t_2 T + p) \chi_{A,p}(T)$
\end{algorithm}

\textit{Complexity.}
The most time-consuming part of the algorithm is the computation of traces of Frobenius $t_2, t_6, t_{6,2}$. It can be done with Schoof-Elkies-Atkin algorithm. Therefore, the complexity of Algorithm \ref{alg:genus_3} is $\tilde{O}(\log^4{p})$.

We implemented this algorithm in Sage \cite{sagemath}.
In the case of split abelian variety $A$ there can be many candidates for coefficients $(b_1, b_2)$ which pass the test by multiplying by random point of $J_C$. So in this case, we return the list of polynomials. This situation occurs because solutions of system of equations correspond to twists of elliptic curves in decomposition of $J_C$ and therefore polynomials are expected to have a common factor.

\begin{example}
	Let $p = fa16da0d09e774b881f9a8836ccc55d1$ (128-bit),
	
	$a = e565b9386557e274880cd235cd733d8c$,
	
	$b = aacc117a8fefc11ca37befa58beb2be9$.
	
	By applying the algorithm, we obtain
	
	$t_2 = 4d089c83177cc1f8$,
	
	$b_1 = 945309b8f7ad6614$,
	
	$b_2 = a426bbdfdd37d53206f17355b5106441$.
	
	The computation took $7.01$ sec. on laptop with Core i7-4700HQ CPU  clocked at 2.40GHz.
	
	We have $\mathbb{Q}$-irreducible polynomials $\chi_{p,A}(T)$ and $\chi_{p^2,A}(T)$. Therefore, $A$ is simple over $\mathbb{F}_p$ and $\mathbb{F}_{p^2}$.
	The number of points on $A$ is
	\begin{equation*}
	\#A(\mathbb{F}_p) = f450a3ebab4ff949332678949cc73c566b8ea584ae41c426300701830bde70c9.
	\end{equation*}
	It has a large divisor $r = e27313cdfeb582ed1111bb6c69c2ac8686d6674146864b7$ of cryptographic size $187$ bit. Thus, abelian variety $A$ is suitable for cryptography based on discrete logarithm problem.
\end{example}

Another way for point-counting in genus $3$ case is to derive an explicit formulae for the $\#J_C(\mathbb{F}_q)$ as it is done in \cite{GuillevicVergnaud2012} for genus $2$. A complete list of the characteristic polynomials for the genus $3$ curves to appear in \cite{NovoselovBoltnev2019}.

\section{Genus 4}
\label{sec:pc_genus4}
In this case from the results of Section \ref{sec:jacobian_decomposition}, we have \[
J_C(\mathbb{F}_q[\sqrt[8]{b}]) \sim J_{X_1}(\mathbb{F}_q[\sqrt[8]{b}]) \times J_{X_2}(\mathbb{F}_q[\sqrt[8]{b}]),
\] where
\[
X_1: y^2 = (x + 2 \sqrt[8]{b}) (x^4 - 4 x^2 \sqrt[4]{b} + 2 \sqrt{b} +  a)
\]
and 
\[
X_2: y^2 = (x - 2 \sqrt[8]{b}) (x^4 - 4 x^2 \sqrt[4]{b} + 2 \sqrt{b} +  a).
\]

To compute the order of Jacobian $\#J_C(\mathbb{F}_q)$, we compute characteristic polynomials of the genus $2$ curves $X_1, X_2$ over $\mathbb{F}_q[\sqrt[8]{b}]$ and determine $\chi_{C,q}(T)$ from them. Since the curves $X_1, X_2$ are isomorphic over $\mathbb{F}_q[\sqrt[8]{b}, \sqrt{-1}]$, it's enough to compute one of the characteristic polynomials and choose the sign in coefficients of second one depending on whether $-1$ is a square in $\mathbb{F}_q$ or not.
For fast calculation of $\chi_{X_1}(T)$ over $\mathbb{F}_q[\sqrt[8]{b}]$, we use twist of $X_1$ defined over $\mathbb{F}_q[\sqrt{b}]$ by equation
\[
\tilde{X}_1:  y^2 = (x + 2) (x^4 - 4 x^2 + 2 +  a / \sqrt{b}).
\]
We compute the characteristic polynomial of $\tilde{X}_1$ over $\mathbb{F}_q[\sqrt{b}]$ and determine the characteristic polynomial of $X_1$ over $\mathbb{F}_q[\sqrt[8]{b}]$ by using formulae from \cite[p.5]{GuillevicVergnaud2012}:
\[
a_{1,2} = 2 a_{2} - a_{1}^2,
\]
\[
a_{2,2} = a_{2}^2 - 4 q \cdot a_{2} + 2 q^2 + 2 q \cdot a_{1,2}.
\]

After computation of characteristic polynomial of $X_1$ over $\mathbb{F}_q[\sqrt[8]{b}]$ we compute step-by-step the characteristic polynomial $\chi_{C,q}(T)$ descending over quadratic extensions. We do this by solving following system of equations obtained from \eqref{eq:L_from_Lk}:
\begin{equation}
\label{eq:g4_a12}
a_{1,2} = -a_{1}^2 + 2 a_{2},
\end{equation}
\begin{equation}
\label{eq:g4_a22}
a_{2,2} = a_{2}^2 - 2 a_{1} a_{3} + 2 a_{4},
\end{equation}
\begin{equation}
\label{eq:g4_a32}
a_{3,2} = -2 q \cdot a_{1} a_{3} + 2 q^2 a_{2} - a_{3}^2 + 2 a_{2} a_{4},
\end{equation}
\begin{equation}
\label{eq:g4_a42}
a_{4,2} = 2 q^3 \cdot a_{1,2} - 4 q \cdot a_{3}^2 + 4 q \cdot a_{2} a_{4} - 4 q^2 \cdot a_{4} + a_{4}^2 +
2 q^2 \cdot a_{2,2} - 2 q \cdot a_{3,2} + 2 q^4.
\end{equation}

From \eqref{eq:g4_a12} and \eqref{eq:g4_a22} we have for $a_1 \neq 0$:
\begin{equation}
\label{eq:g4_a2}
a_{2} = (a_{1,2} + a_{1}^2) / 2,
\end{equation}
\begin{equation}
\label{eq:g4_a3}
a_{3} = (a_{2}^2 + 2 a_{4} - a_{2,2}) / (2 a_{1}).
\end{equation}

By substituting \eqref{eq:g4_a3} to \eqref{eq:g4_a32} and \eqref{eq:g4_a42}, we obtain
\begin{equation}
\label{eq:g4_a4}
\begin{split}
\frac{a_{4}^2}{a_{1}^2}
+
\left(\frac{a_{2}^2 - a_{2,2}}{a_{1}^2}-2 a_{2}+2 q\right) a_{4}
-
a_{2,2} q + a_{2}^2 q+a_{3,2} + \\ + \frac{a_{2,2}^2-2 a_{2}^2 a_{2,2} + a_{2}^4}{4 a_{1}^2} - 2 a_{2} q^2 = 0
\end{split}
\end{equation}
and
\begin{equation}
\label{eq:g4_a4_2}
\begin{split}
\left(1-\frac{4 q}{a_{1}^2}\right) a_{4}^2 
+
4 q \left(\frac{a_{2,2} - a_{2}^2}{a_{1}^2} +  a_{2} - q\right) a_{4}
+
2 q^4 
+
2 q^3 a_{1,2}
+ \\ +
2 q^2 a_{2,2}
- 2 q a_{3,2}
-\frac{(a_{2,2}^2+ 2 a_2^2 a_{2,2}-a_2^4) q}{a_1^2}
-a_{4,2} = 0.
\end{split}
\end{equation}
After substituting \eqref{eq:g4_a2} in above equations, eliminating $a_4$ by taking resultant, and dividing by $a_1^4$, we obtain degree $16$ polynomial in $a_1$
\begin{equation}
\label{eq:g4_a1_relation}
a_{1}^{16}
+
c_{14} a_{1}^{14} 
+
c_{12} a_{1}^{12}
+
c_{10} a_{1}^{10} 
+
c_8 a_{1}^8
+
c_6 a_{1}^6
+
c_4 a_{1}^4
+
c_2 a_{1}^2
+
c_0 = 0,
\end{equation}
where coefficients $c_i$ are given in Appendix \ref{appendix:a1_relation}.

From this, we have at most $16$ possible values for $a_1$. To find them, we use the same technique as in \cite[\S4]{Satoh2009}. We factor this polynomial over $\mathbb{F}_l$ for $l > 16 \sqrt{q}$ and exclude solutions which does not satisfy the bound $|a_1| \leq 8 \sqrt{q}$.
For each $a_1$ there are at most $2$ possible tuples $(a_2, a_3, a_4)$. So, we have obtained $32$ candidates for $\#J_C(\mathbb{F}_q)$. The right one can be found by taking random points on Jacobian and multiplying it by candidate for the Jacobian order.
The described method leads to Algorithm \ref{alg:genus_4}.

\begin{algorithm}[H]
	\SetAlgoLined
	\caption{Computation of $\chi_{C,q}(T)$ for hyperelliptic curve $C: y^2 = x^9 + a x^5 + b x$.
		\label{alg:genus_4}
	}
	\KwIn{$a,b \in \mathbb{F}_q$.}
	\KwOut{$(a_{1}, a_{2}, a_{3}, a_{4})$ the coefficients of $\chi_{C,q}(T)$.}
	
	\nl Find $k$ such that $\mathbb{F}_q[\sqrt[8]{b}] \simeq \mathbb{F}_{q^k}$\;
	\nl Compute $\chi_{X_{1,q^k}}(T) = T^4 + b_{1,k} T^3 + b_{2,k} T^2 + b_{1,k} q^k T + q^{2k}$\;
	\nl \eIf{$\sqrt{-1} \in \mathbb{F}_{q^k}$}{
		\nl $a_{1,k} \leftarrow 2 b_{1,k}$ and $a_{2,k} \leftarrow 2 b_{2,k}+b_{1,k}^2$\;
		\nl $a_{3,k} \leftarrow 2 b_{1,k}q^k+2 b_{1,k} b_{2,k}$\;
		\nl $a_{4,k} \leftarrow 2 q^{2 k}+2 b_{1,k}^2 q^k+b_{2,k}^2$\;
	}{
		\nl $a_{1,k} \leftarrow 0$ and $a_{3,q^k} \leftarrow 0$\;
		\nl $a_{2,k} \leftarrow 2 b_{2,k}-b_{1,k}^2$\;
		\nl $a_{4,k} \leftarrow 2 q^{2 k} - 2 b_{1,k}^2 q^k + b_{2,k}^2$\;
	}
	\nl $i = k$\;
	\nl $list = \{(a_{1,i}, a_{2,i},a_{3,i},a_{4,i})\}$\;
	\nl \While{$i\neq 1$}{
		\nl $S \leftarrow \{\}$\;
		\nl \ForEach{$(a_{1,{i}}, a_{2,{i}},a_{3,{i}},a_{4,{i}}) \in list$}{
			\nl Factor polynomial \eqref{eq:g4_a1_relation} over $\mathbb{F}_l$ for prime $l > 16 \sqrt{q}$ to obtain a list of possible $a_{1,{i/2}}$\;
			\nl Make a list $S'$ of all possible tuples $(a_{1,{i/2}}, a_{2,{i/2}},a_{3,{i/2}},a_{4,{i/2}})$ using \eqref{eq:g4_a2}, \eqref{eq:g4_a3}, and \eqref{eq:g4_a4} which satisify Hasse-Weil bound\;
			\nl Exclude extra tuples from $S'$ by multiplying the order candidate by the random points on $J_C(\mathbb{F}_{q^{i/2}})$\;
			\nl $S \leftarrow S \cup S'$\;	
		}
		\nl $list \leftarrow S$\;
		\nl	$i \leftarrow i / 2$\;
	}
	\nl \Return{list[1]};
\end{algorithm}

\textit{Complexity.} The factorization of polynomial over finite field $\mathbb{F}_l$ in Algorithm \ref{alg:genus_4} takes time $\tilde{O}(\log{l}) = \tilde{O}(\log{q})$ (see \cite[Th.14.14, p.390]{GathenGerhard2013}). Computation of the characteristic polynomial $\chi_{X_1,p}(T)$ of genus $2$ curve $X_1$ can be done in time $\tilde{O}(\log^8{p})$ using Gaudry-Schost algorithm \cite{GaudrySchost2012}. Therefore, the Algorithm \ref{alg:genus_4} has complexity $\tilde{O}(\log^8{p})$.

Note that the algorithm is less efficient than SEA algorithm with complexity $\tilde{O}(\log^4{p})$, but still more efficient than general algorithms \cite{Pila1990,HuangIerardi1998} for point-counting on genus $4$ curves.

\textit{Implementation of algorithm.} We implemented the algorithm in Sage\cite{sagemath} computer algebra system.

\begin{example}
	Let $p = 4398046511233$, $a = 4231746819984$, $b = 141248343157$.
	Applying the algorithm we obtained coefficients of $\chi_C(T)$:
	\[
	a_1 = - 2112224, ~ a_2 = 2230745113088,~ a_3 = 4306063463022049120,
	\]
	and
	\[
	a_4 = 2745301697312802596344066.
	\]
	The polynomial $\chi_p(T)$ is $\mathbb{Q}$-irreducible and therefore $J_C$ is simple.
	
	The order of Jacobian 
	\[
	\#J_C(\mathbb{F}_p) = fffff7f1c920731feb75b80a59bb590a917dec3284 
	\]
	has size $167$ bit.
	
	The computation took $42$ min. $24$ sec. on laptop with Core i7-4700HQ CPU clocked at 2.40GHz.
\end{example}

\textit{New congruences for Legendre polynomials.}
From decomposition of the curve $C'$ and results for genus $4$ from \cite[Table 1,2]{Novoselov2017} or Table \ref{table:C_frobs_mod_p} we obtain new congruences for Legendre polynomials:
\[
P_{\frac{p-1}{8}}(\rho) \equiv \frac{-b_1 \pm \sqrt{d}}{2} \pmod{p},
\]
\[
P_{\frac{3p-3}{8}}(\rho) \equiv \frac{2 b_2}{-b_1 \pm \sqrt{d}} \pmod{p}.
\]
Where $\rho \in \mathbb{F}_p$, $d = b_1^2 - 4 b_2$, $b_1, b_2$ are coefficients of the Frobenius polynomial of the curve $X'_1$ with $c = -2\rho$.

\begin{example}
It is easy to find an example where the curve $X'_1$ has absolutely simple Jacobian.
If $c = 7$ and $p = 7$ we have $\chi_{D,p}(T) = T^4 - 4 T^3 + 16 T^2 - 28 T + 49$. This polynomial is $\mathbb{Q}$-irreducible, therefore $J_D$ is simple. By result of \cite{ChouKani2014} if a simple ordinary abelian surface is geometrically split, then it splits over an extension of base field of degree at most $6$. By straightforward check, we can see that all $\chi_{D,p^k}(T)$ for $k \leq 6$ are $\mathbb{Q}$-irreducible. Therefore, the curve $D$ with $c=3,p=7$ has absolutely simple Jacobian. Thus, we have obtained congruences for Legendre polynomials $P_\frac{p-1}{8},P_{\frac{3p-3}{8}}$ which connect them with genus $2$ curves that have absolutely simple Jacobians. 
\end{example}

\section*{Conclusion}
In this paper, we have derived two algorithms to compute the number of points on the Jacobian of hyperelliptic curves $C$ in case of $g = 3$ and $g = 4$ with complexity $\tilde{O}(\log^4{p})$ and $\tilde{O}(\log^8{p})$. We obtained a complete list of $\chi_{C,p}(T) \pmod{p}$ (Table \ref{table:C_frobs_mod_p}).

The method for deriving new congruences for Legendre polynomials were obtained in Section \ref{sec:CM_matrices}. And new congruences for Legendre polynomials $P_\frac{p-1}{8}$ and $P_\frac{3p-3}{8}$ were found by using this method in Section \ref{sec:pc_genus4}.

This work is an extended and refined version of preliminary results presented by author on the conference SibeCrypt'18 \cite{Novoselov2018}.

\begin{table}%
\centering\scriptsize
\caption{Characteristic polynomials modulo $p$ for hyperelliptic curves of the form $C: y^2 = x^{2g+1} + a x^{g+1} + b x$ over a prime finite field $\mathbb{F}_p, p>2, p \not|~g, P_m := P_m(-\frac{a}{2\sqrt{b}})$, $b_{i} := \sqrt{b}^{\frac{p-1}{i}}$.}
\label{table:C_frobs_mod_p}
\begin{tabular}{|c|l|l|}
\hline
$g$ &
\rule{0pt}{10pt}
conditions           & $\chi_{C,p}(T) \pmod{p}$ \\ \hline
2&
$p \equiv 1 \pmod{4}$ &
\rule{0pt}{10pt}
$T^2 (T - b_4^3 P_{\frac{p - 1}{4}}) (T - b_4 P_{\frac{p - 1}{4}})$ 
\\\hline
2&
$p \equiv 3 \pmod{4}$ &
\rule{0pt}{10pt}
$T^2 (T^2 - b_2^2 P_{\frac{p - 3}{4}}^2)$ 
\\\hline
3&
$p \equiv 1 \pmod{3}$ &
\rule{0pt}{10pt}
$T^3 (T - b_2 P_{\frac{p - 1}{2}}) (T - b_6^5 P_{\frac{p - 1}{6}}) (T - b_6 P_{\frac{p - 1}{6}})$ 
\\\hline
3&
$p \equiv 2 \pmod{3}$ &
\rule{0pt}{10pt}
$T^3 (T - b_2 P_{\frac{p - 1}{2}}) (T^2 - b_2^2 P_{\frac{p - 5}{6}}^2)$ 
\\\hline
4&
$p \equiv 1 \pmod{8}$ &
\rule{0pt}{10pt}
$T^4 (T - b_8^5 P_{\frac{3 p - 3}{8}}) (T - b_8^3 P_{\frac{3 p - 3}{8}}) (T - b_8^7 P_{\frac{p - 1}{8}}) (T - b_8 P_{\frac{p - 1}{8}})$ 
\\\hline
4&
$p \equiv 3 \pmod{8}$ &
\rule{0pt}{10pt}
$T^4
(T^2 - b_2^3 P_{\frac{3 p - 1}{8}} P_{\frac{p - 3}{8}})
(T^2 - b_2 P_{\frac{3 p - 1}{8}} P_{\frac{p - 3}{8}})$ 
\\\hline
4&
$p \equiv 5 \pmod{8}$ &
\rule{0pt}{10pt}
$T^4
(T^2 - b_4^5 P_{\frac{3 p - 7}{8}} P_{\frac{p - 5}{8}})
(T^2 - b_4^3 P_{\frac{3 p - 7}{8}} P_{\frac{p - 5}{8}})$ 
\\\hline
4&
$p \equiv 7 \pmod{8}$ &
\rule{0pt}{10pt}
$
T^4
(T^2 - b_2^2 P_{\frac{3 p - 5}{8}}^2)
(T^2 - b_2^2 P_{\frac{p - 7}{8}}^2)
$ 
\\\hline
5&
$p \equiv 1 \pmod{5}$ &
\rule{0pt}{10pt}
$T^5
(T - b_2 P_{\frac{p - 1}{2}})
(T - b_{10}^7 P_{\frac{3 p - 3}{10}})
(T - b_{10}^3 P_{\frac{3 p - 3}{10}})
(T - b_{10}^9 P_{\frac{p - 1}{10}})
(T - b_{10} P_{\frac{p - 1}{10}})
$ 
\\\hline
5&
$p \equiv 2 \pmod{5}$ &
\rule{0pt}{10pt}
$
T^5
(T^4 - b_2^4 P_{\frac{3 p - 1}{10}}^2 P_{\frac{p - 7}{10}}^2)
(T - b_2 P_{\frac{p - 1}{2}})
$ 
\\\hline
5&
$p \equiv 3 \pmod{5}$ &
\rule{0pt}{10pt}
$
T^5
(T^4 -b_2^4 P_{\frac{3 p - 9}{10}}^2 P_{\frac{p - 3}{10}}^2)
(T - b_2 P_{\frac{p - 1}{2}})
$ 
\\\hline
5&
$p \equiv 4 \pmod{5}$ &
\rule{0pt}{10pt}
$T^5
(T^2 - b_2^2 P_{\frac{3 p - 7}{10}}^2)
(T^2 - b_2^2 P_{\frac{p - 9}{10}}^2)
(T - b_2 P_{\frac{p - 1}{2}})$ 
\\\hline
6&
$p \equiv 1 \pmod{12}$ &
\rule{0pt}{15pt}
\begin{tabular}[c]{@{}l@{}}$ T^6
	(T - b_{12}^{7} P_{\frac{5 p - 5}{12}})
	(T - b_{12}^{5} P_{\frac{5 p - 5}{12}})
	(T - b_{12}^{9} P_{\frac{p - 1}{4}})
	(T - b_{12}^{3} P_{\frac{p - 1}{4}})(T - b_{12}^{11} P_{\frac{p - 1}{12}}) \times$
	\\
	$\times (T - b_{12} P_{\frac{p - 1}{12}})$
\end{tabular} 
\\\hline
6&
$p \equiv 5 \pmod{12}$ &
\rule{0pt}{10pt}
$T^6
(T^2 - b_2^3 P_{\frac{5 p - 1}{12}} P_{\frac{p - 5}{12}})
(T^2 - b_2 P_{\frac{5 p - 1}{12}} P_{\frac{p - 5}{12}})
(T - b_4^3 P_{\frac{p - 1}{4}})
(T - b_4 P_{\frac{p - 1}{4}})$ 
\\\hline
6&
$p \equiv 7 \pmod{12}$ &
\rule{0pt}{10pt}
$T^6
(T^2 - b_2^2 P_{\frac{p - 3}{4}}^2)
(T^2 - b_3^4 P_{\frac{5 p - 11}{12}} P_{\frac{p - 7}{12}})
(T^2 - b_3^2 P_{\frac{5 p - 11}{12}} P_{\frac{p - 7}{12}})$ 
\\\hline
6&
$p \equiv 11 \pmod{12}$ &
\rule{0pt}{10pt}
$T^6
(T^2 - b_2^2 P_{\frac{5 p - 7}{12}}^2)
(T^2 - b_2^2 P_{\frac{p - 3}{4}}^2)
(T^2 - b_2^2 P_{\frac{p - 11}{12}}^2)
$ 
\\\hline
7&
$p \equiv 1 \pmod{7}$ &
\rule{0pt}{15pt}
\begin{tabular}[c]{@{}l@{}}$T^7
	(T - b_2 P_{\frac{p - 1}{2}})
	(T - b_{14}^9 P_{\frac{5 p - 5}{14}})
	(T - b_{14}^5 P_{\frac{5 p - 5}{14}})
	(T - b_{14}^{11} P_{\frac{3 p - 3}{14}})
	(T - b_{14}^3 P_{\frac{3 p - 3}{14}})
	\times$\\
	$\times
	(T - b_{14}^{13} P_{\frac{p - 1}{14}})
	(T - b_{14} P_{\frac{p - 1}{14}})$
\end{tabular}
\\\hline
7&
$p \equiv 2 \pmod{7}$ &
\rule{0pt}{10pt}
$T^7
(T^3 - b_2^3 P_{\frac{5 p - 3}{14}} P_{\frac{3 p - 13}{14}} P_{\frac{p - 9}{14}})^2
(T - b_2 P_{\frac{p - 1}{2}})$ 
\\\hline
7&
$p \equiv 3 \pmod{7}$ &
\rule{0pt}{10pt}
$T^7
(T^6 - b_2^6 P_{\frac{5 p - 1}{14}}^2 P_{\frac{3 p - 9}{14}}^2 P_{\frac{p - 3}{14}}^2)
(T - b_2 P_{\frac{p - 1}{2}})$ 
\\\hline
7&
$p \equiv 4 \pmod{7}$ &
\rule{0pt}{10pt}
$T^7 (T^3 - b_2^3 P_{\frac{5 p - 13}{14}} P_{\frac{3 p - 5}{14}} P_{\frac{p - 11}{14}})^2 (T - b_2 P_{\frac{p - 1}{2}})$ 
\\\hline
7&
$p \equiv 5 \pmod{7}$ &
\rule{0pt}{10pt}
$T^7
(T^6 - b_2^6 P_{\frac{5 p - 11}{14}}^2 P_{\frac{3 p - 1}{14}}^2 P_{\frac{p - 5}{14}}^2)
(T - b_2 P_{\frac{p - 1}{2}})
$ 
\\\hline
7&
$p \equiv 6 \pmod{7}$ &
\rule{0pt}{10pt}
$T^7
(T^2 - b_2^2 P_{\frac{5 p - 9}{14}}^2)
(T^2 - b_2^2 P_{\frac{3 p - 11}{14}}^2)
(T^2 - b_2^2 P_{\frac{p - 13}{14}}^2)
(T - b_2 P_{\frac{p - 1}{2}})$ 
\\\hline

\end{tabular}
\end{table}

\bibliography{mybibfile}

\begin{thebibliography}{10}
\expandafter\ifx\csname url\endcsname\relax
  \def\url#1{\texttt{#1}}\fi
\expandafter\ifx\csname urlprefix\endcsname\relax\def\urlprefix{URL }\fi
\expandafter\ifx\csname href\endcsname\relax
  \def\href#1#2{#2} \def\path#1{#1}\fi

\bibitem{Satoh2009}
T.~Satoh, Generating genus two hyperelliptic curves over large characteristic
  finite fields, in: Annual International Conference on the Theory and
  Applications of Cryptographic Techniques, Springer, 2009, pp. 536--553.

\bibitem{GuillevicVergnaud2012}
A.~Guillevic, D.~Vergnaud, Genus 2 hyperelliptic curve families with explicit
  jacobian order evaluation and pairing-friendly constructions, in:
  International Conference on Pairing-Based Cryptography, Springer, 2012, pp.
  234--253.

\bibitem{Novoselov2017}
S.~A. Novoselov, Hyperelliptic curves, cartier--manin matrices and legendre
  polynomials, Prikladnaya Diskretnaya Matematika~(37) (2017) 20--31.

\bibitem{BrillhartMorton2004}
J.~Brillhart, P.~Morton, Class numbers of quadratic fields, hasse invariants of
  elliptic curves, and the supersingular polynomial, Journal of Number Theory
  106~(1) (2004) 79--111.

\bibitem{Sun2013}
Z.-H. Sun, Congruences concerning legendre polynomials ii, Journal of Number
  Theory 133~(6) (2013) 1950--1976.

\bibitem{Sun2013b}
Z.-H. Sun, Congruences involving $\binom{2k}{k}^2 \binom{3k}{k}$, Journal of
  Number Theory 133~(5) (2013) 1572--1595.

\bibitem{Sun2013c}
Z.-H. Sun, Legendre polynomials and supercongruences, Acta Arith 159~(2) (2013)
  169--200.

\bibitem{CohenFrey+2005}
H.~Cohen, G.~Frey, R.~Avanzi, C.~Doche, T.~Lange, K.~Nguyen, F.~Vercauteren,
  Handbook of elliptic and hyperelliptic curve cryptography, CRC press, 2005.

\bibitem{Paulhus2007}
J.~Paulhus, Elliptic factors in jacobians of low genus curves, Ph.D. thesis,
  University of Illinois at Urbana-Champaign (2007).

\bibitem{Paulhus2008}
J.~Paulhus, Decomposing jacobians of curves with extra automorphisms, Acta
  Arith 132~(3) (2008) 231--244.

\bibitem{TauTopVer1991}
W.~Tautz, J.~Top, A.~Verberkmoes, Explicit hyperelliptic curves with real
  multiplication and permutation polynomials, Canad. J. Math 43~(5) (1991)
  1055--1064.

\bibitem{Smith2005}
B.~A. Smith, Explicit endomorphisms and correspondences, Ph.D. thesis,
  University of Sydney (2005).

\bibitem{KaniRosen1989}
E.~Kani, M.~Rosen, Idempotent relations and factors of jacobians, Mathematische
  Annalen 284~(2) (1989) 307--327.

\bibitem{BrandtStichtenoth1986}
R.~Brandt, H.~Stichtenoth, Die automorphismengruppen hyperelliptischer kurven,
  Manuscripta mathematica 55~(1) (1986) 83--92.

\bibitem{BujalanceGamboaGromadzki1993}
E.~Bujalance, J.~Gamboa, G.~Gromadzki, The full automorphism groups of
  hyperelliptic riemann surfaces, manuscripta mathematica 79~(1) (1993)
  267--282.

\bibitem{BhargavaZieve1999}
M.~Bhargava, M.~E. Zieve, Factoring dickson polynomials over finite fields,
  Finite Fields and Their Applications 5~(2) (1999) 103--111.

\bibitem{Manin1961}
Y.~I. Manin, The hasse--witt matrix of an algebraic curve, Izvestiya Rossiiskoi
  Akademii Nauk. Seriya Matematicheskaya 25~(1) (1961) 153--172.

\bibitem{Yui1978}
N.~Yui, On the jacobian varieties of hyperelliptic curves over fields of
  characteristic p> 2, Journal of algebra 52~(2) (1978) 378--410.

\bibitem{Schoof1995}
R.~Schoof, Counting points on elliptic curves over finite fields, J. Th{\'e}or.
  Nombres Bordeaux 7~(1) (1995) 219--254.

\bibitem{Stichtenoth2009}
H.~Stichtenoth, Algebraic function fields and codes, Vol. 254, Springer Science
  \& Business Media, 2009.

\bibitem{Abelard2018}
S.~Abelard, Counting points on hyperelliptic curves in large characteristic:
  algorithms and complexity, Ph.D. thesis, Universit{\'e} de Lorraine (2018).

\bibitem{Abelard2018b}
S.~Abelard, Counting points on hyperelliptic curves with explicit real
  multiplication in arbitrary genus, arXiv preprint arXiv:1810.11068.

\bibitem{FurKawTak2003}
E.~Furukawa, M.~Kawazoe, T.~Takahashi, Counting points for hyperelliptic curves
  of type $y^2= x^5+ ax$ over finite prime fields, in: International Workshop
  on Selected Areas in Cryptography, Springer, 2003, pp. 26--41.

\bibitem{sagemath}
{The Sage Developers}, {S}ageMath, the {S}age {M}athematics {S}oftware {S}ystem
  ({V}ersion 8.6), {\tt https://www.sagemath.org} (2019).

\bibitem{NovoselovBoltnev2019}
S.~A. Novoselov, Y.~F. Boltnev, Characteristic polynomials of the curve
  $y^2=x^7+ax^4+bx$ over finite fields[in preparation].

\bibitem{GathenGerhard2013}
J.~Von Zur~Gathen, J.~Gerhard, Modern computer algebra, Cambridge university
  press, 2013.

\bibitem{GaudrySchost2012}
P.~Gaudry, {\'E}.~Schost, Genus 2 point counting over prime fields, Journal of
  Symbolic Computation 47~(4) (2012) 368--400.

\bibitem{Pila1990}
J.~Pila, Frobenius maps of abelian varieties and finding roots of unity in
  finite fields, Mathematics of Computation 55~(192) (1990) 745--763.

\bibitem{HuangIerardi1998}
M.-D. Huang, D.~Ierardi, Counting points on curves over finite fields, Journal
  of Symbolic Computation 25~(1) (1998) 1--21.

\bibitem{ChouKani2014}
K.-M.~J. Chou, E.~Kani, Simple geometrically split abelian surfaces over finite
  fields, J. Ramanujan Math. Soc. 29~(1) (2014) 31--62.

\bibitem{Novoselov2018}
S.~A. Novoselov, Counting points on hyperelliptic curves of type
  $y^2=x^{2g+1}+ax^{g+1}+bx$, Prikladnaya Diskretnaya Matematika.
  Supplement~(11) (2018) 30--33.

\end{thebibliography}

\appendix
\section{Genus 4. Equation for $a_{1}$.}
\label{appendix:a1_relation}
\[
a_{1}^{16}
+
c_{14} a_{1}^{14} 
+
c_{12} a_{1}^{12}
+
c_{10} a_{1}^{10} 
+
c_8 a_{1}^8
+
c_6 a_{1}^6
+
c_4 a_{1}^4
+
c_2 a_{1}^2
+
c_0 = 0.
\]
\begin{equation*}
\begin{split}
c_0 =& (128 q^4-128 a_{1,2} q^3+32 a_{1,2}^2 q^2+128 a_{3,2} q-64 a_{1,2} a_{2,2} q+\\&+16 a_{1,2}^3 q-64 a_{4,2}+16 a_{2,2}^2-8 a_{1,2}^2 a_{2,2}+a_{1,2}^4)^2,\\
c_2 =& -131072 q^7+163840 a_{1,2} q^6-32768 a_{2,2} q^5-65536 a_{1,2}^2 q^5-81920 a_{3,2} q^4
+\\&+
45056 a_{1,2} a_{2,2} q^4 +
5120 a_{1,2}^3 q^4+65536 a_{4,2} q^3+49152 a_{1,2} a_{3,2} q^3
+\\&-
16384 a_{2,2}^2 q^3-12288 a_{1,2}^2 a_{2,2} q^3
+ 2048 a_{1,2}^4 q^3-49152 a_{1,2} a_{4,2} q^2
+\\&+
4096 a_{1,2}^2 a_{3,2} q^2+8192 a_{1,2} a_{2,2}^2 q^2-5120 a_{1,2}^3 a_{2,2} q^2+768 a_{1,2}^5 q^2
+\\&+
16384 a_{2,2} a_{4,2} q-16384 a_{3,2}^2 q+4096 a_{1,2} a_{2,2} a_{3,2} q-1024 a_{1,2}^3 a_{3,2} q
+\\&-
4096 a_{2,2}^3 q + 4096 a_{1,2}^2 a_{2,2}^2 q-1280 a_{1,2}^4 a_{2,2} q+128 a_{1,2}^6 q
+\\&+
8192 a_{3,2} a_{4,2}-6144 a_{1,2} a_{2,2} a_{4,2}+1536 a_{1,2}^3 a_{4,2} + 2048 a_{2,2}^2 a_{3,2}
+\\&-
1024 a_{1,2}^2 a_{2,2} a_{3,2}+128 a_{1,2}^4 a_{3,2}-512 a_{1,2} a_{2,2}^3 +
384 a_{1,2}^3 a_{2,2}^2
+\\&-
96 a_{1,2}^5 a_{2,2}+8 a_{1,2}^7,\\
c_4 =& 
253952 q^6-233472 a_{1,2} q^5+47104 a_{2,2} q^4+65024 a_{1,2}^2 q^4 + 57344 a_{3,2} q^3
+\\&-
26624 a_{1,2} a_{2,2} q^3-5632 a_{1,2}^3 q^3-61440 a_{4,2} q^2-12288 a_{1,2} a_{3,2} q^2
+\\&+
7168 a_{2,2}^2 q^2-2048 a_{1,2}^2 a_{2,2} q^2+1344 a_{1,2}^4 q^2+22528 a_{1,2} a_{4,2} q
+\\&-
2048 a_{2,2} a_{3,2} q-2560 a_{1,2}^2 a_{3,2} q+3584 a_{1,2} a_{2,2}^2 q-1280 a_{1,2}^3 a_{2,2} q
+\\&+
96 a_{1,2}^5 q-7168 a_{2,2} a_{4,2}+1280 a_{1,2}^2 a_{4,2}+4096 a_{3,2}^2
+\\&-
2048 a_{1,2} a_{2,2} a_{3,2}+512 a_{1,2}^3 a_{3,2}-256 a_{2,2}^3+576 a_{1,2}^2 a_{2,2}^2
+\\&-240 a_{1,2}^4 a_{2,2}+28 a_{1,2}^6,\\
c_6 =&
-204800 q^5+136192 a_{1,2} q^4-16384 a_{2,2} q^3-29696 a_{1,2}^2 q^3
+\\&-
12288 a_{3,2} q^2+1024 a_{1,2} a_{2,2} q^2+4096 a_{1,2}^3 q^2 + 20480 a_{4,2} q
+\\&-
1024 a_{1,2} a_{3,2} q+1024 a_{2,2}^2 q-320 a_{1,2}^4 q-2560 a_{1,2} a_{4,2}
+\\&-
1024 a_{2,2} a_{3,2}+768 a_{1,2}^2 a_{3,2}+384 a_{1,2} a_{2,2}^2-320 a_{1,2}^3 a_{2,2}+56 a_{1,2}^5,\\
\end{split}
\end{equation*}
\begin{equation*}
\begin{split}
c_8 =& 79104 q^4-38144 a_{1,2} q^3+512 a_{2,2} q^2+7104 a_{1,2}^2 q^2+256 a_{3,2} q
+\\&+
640 a_{1,2} a_{2,2} q-800 a_{1,2}^3 q-2176 a_{4,2}+512 a_{1,2} a_{3,2}+96 a_{2,2}^2
+\\&-
240 a_{1,2}^2 a_{2,2}+70 a_{1,2}^4,\\
c_{10} =& -15360 q^3+5376 a_{1,2} q^2+256 a_{2,2} q-768 a_{1,2}^2 q+128 a_{3,2} +\\&
-96 a_{1,2} a_{2,2}+56 a_{1,2}^3,\\
c_{12} =& 1472 q^2-352 a_{1,2} q-16 a_{2,2}+28 a_{1,2}^2,\\
c_{14} =& 8 a_{1,2}-64 q.\\
\end{split}
\end{equation*}
\end{document}